\newcommand{\G}{\Gamma}
\newcommand{\lam}{\lambda}
\newcommand{\tG}{{\tilde\Gamma}}
\newcommand{\tGk}{{\tilde\Gamma_k}}
\newcommand{\g}{\gamma}
\newcommand{\tg}{{\tilde\gamma}}
\newcommand{\tgk}{{\tilde\gamma_k}}
\newcommand{\tphi}{\tilde\phi}
\newcommand{\Gc}{{\Gamma^c}}
\newcommand{\Gck}{{\Gamma^c_k}}
\newcommand{\tHp}{\tilde H^{1/2}}    
\newcommand{\tHm}{\tilde H^{-1/2}}
\newcommand{\R}{\mbox{\rm I\kern-.18em R}}
\newcommand{\tR}{\mbox{\tiny\rm I\kern-.18em R}}
\newcommand{\fR}{\mbox{\footnotesize\rm I\kern-.18em R}}
\newcommand{\sR}{\mbox{\small\rm I\kern-.18em R}}
\newcommand{\N}{\mbox{\rm I\kern-.18em N}}
\newcommand{\dist}{\mathop{\rm dist}\nolimits}
\newcommand{\spn}{\mathop{\rm span}\nolimits}
\newcommand{\<}{\langle}
\renewcommand{\>}{\rangle}
\newcommand{\supp}{\mathop{\rm supp}\nolimits}
\newcommand{\meas}{\mathop{\rm meas}\nolimits}
\newcommand{\CB}{{\cal B}}
\newcommand{\CE}{{\cal E}}
\newcommand{\CF}{{\cal F}}
\newcommand{\CC}{{\cal C}}
\newcommand{\CN}{{\cal N}}
\newcommand{\CT}{{\cal T}}
\newcommand{\bn}{{\bf n}}
\newtheorem{theorem}{Theorem}[section]
\newtheorem{remark}[theorem]{Remark}
\newtheorem{lemma}[theorem]{Lemma}
\newtheorem{prop}[theorem]{Proposition}
\newenvironment{proof}{\noindent{\em Proof}. }{\hfill$\Box$\vspace{3pt}}
\newcommand{\be}{\begin{equation}}
\newcommand{\ee}{\end{equation}}
\newcommand{\bea}{\begin{eqnarray}}
\newcommand{\eea}{\end{eqnarray}}
\newcommand{\beas}{\begin{eqnarray*}}
\newcommand{\eeas}{\end{eqnarray*}}
\newcommand{\ba}{\begin{array}}
\newcommand{\ea}{\end{array}}
\newcommand{\rref}[1]{{\rm \ref{#1}}}
\newcommand{\rcite}[1]{{\rm \cite{#1}}}
\newcommand{\mysection}[2]
{
\section{#1} \label{#2}
\setcounter{equation}{0}
\setcounter{figure}{0}
\setcounter{table}{0}
}
\title{Radial basis functions for the solution of hypersingular operators
       on open surfaces\thanks{Supported by FONDECYT-Chile
under grant number 1110324 and UNSW FRG Grant number
PS24436.}}
\author{Norbert Heuer
\thanks{Facultad de Matem\'aticas,
        Pontificia Universidad Cat\'olica de Chile,
        Avenida Vicu\~na Mackenna 4860, Santiago, Chile.
        email: {\tt nheuer@mat.puc.cl}.}
\and Thanh Tran
\thanks{School of Mathematics and Statistics,
        The University of New South Wales,
        Sydney 2052, Australia.
        email: {\tt thanh.tran@unsw.edu.au}.}
}
\begin{document}
\date{}
\maketitle

\bigskip
\begin{abstract}
We analyze the approximation by radial basis functions
of a hypersingular integral equation on an open surface.
In order to accommodate the homogeneous essential
boundary condition along the surface boundary, scaled radial
basis functions on an extended surface and Lagrangian multipliers
on the extension are used.
We prove that our method converges quasi-optimally.
Approximation results for scaled radial basis functions indicate
that, for highly regular radial basis functions, the achieved
convergence rates are close to the one of low-order conforming
boundary element schemes. Numerical experiments confirm our conclusions.

\bigskip
\noindent
{\em Key words}: boundary element method, radial basis functions,
                 non-conforming method, Lagrangian multiplier

\noindent
{\em AMS Subject Classification}: 65N55, 65N38
\end{abstract}

\mysection{Introduction}{sec_intro}

This paper is about the approximation by radial basis functions of functions
in Sobolev spaces subject to a Dirichlet boundary condition. To the best of our
knowledge this is the first time that such functions are used and analyzed for
problems with essential boundary condition. There arise several difficulties
when imposing or analyzing trace conditions for spaces of radial basis functions (RBF):
\begin{itemize}
\item[(i)] Radial basis functions are selected by their center points
      on the domain of interest. Their supports are generally large and overlap
      with those of several other RBF.
      Considering boundary traces of RBF close to the boundary,
      their shapes and supports on the boundary vary continuously with the position
      of the center on the domain. Therefore, the structure and basis functions
      of trace spaces is not fixed, neither is there a fixed intrinsic basis on the
      boundary.
\item[(ii)] Analysis of stability and approximation properties of RBF and
      related operators is based on Fourier analysis in the so-called native space.
      Depending on the choice of RBF, this native space is a Sobolev space of
      high regularity. Considering domains with, e.g., Lipschitz boundary there is
      no obvious way to employ arguments from native spaces to traces.
\item[(iii)] Traditional arguments from finite element analysis
      (like locality and equivalence of norms in finite dimensional spaces)
      are difficult to apply to RBF for their very nature.
      That is why the native space is of central importance. In trace
      spaces of RBF, however, such arguments (locality, equivalence of norms) are even
      harder to come by due to the varying structure of functions, cf. (i).
\end{itemize}

In this paper we propose a mixed method employing RBF and finite elements (as Lagrangian
multipliers) to deal with essential boundary conditions.
We analyze non-conforming approximations with scaled radial basis functions in a
fractional order Sobolev space with homogeneous essential boundary condition.
The underlying model problem is the hypersingular integral equation on an open
surface for the solution of the Laplacian in the exterior domain
(with Neumann boundary condition). The energy space of this problem is a
Sobolev space of order $1/2$ on the surface with the condition that functions
can be extended by $0$.

The analysis comprises two principal problems. One is the approximation
theory for radial basis functions in fractional order Sobolev spaces with
essential boundary condition;
the other is the necessity of a non-conforming approach in a fractional order
Sobolev space.

Radial basis functions are well studied, mainly for the interpolation of
scattered data but also for the approximation of partial differential equations.
For some overviews see, e.g.,
\cite{Buhmann_03_RBF,Powell_92_TRB,Schaback_00_UTR,Wendland_05_SDA}.
In particular, Wendland studies the approximation for second order equations
with Neumann boundary condition \cite{Wendland_99_MGM}, and multiresolution
properties of scaled radial basis functions \cite{Wendland_10_MAS}.
Neumann boundary conditions allow
for extending the approximation analysis to the full space where
standard arguments apply (using the native space of the radial basis
functions). To the best of our knowledge there exists no analysis for boundary
value problems with essential boundary condition.
In this paper we tackle this problem for a Sobolev space of order $1/2$.
Let us note that spherical radial basis functions (on the closed sphere)
in this space are well analyzed,
see \cite{PhamT_08_NSB,PhamT_09_SPE,TranLGSS_09_BSR,TranLGSS_10_PPE}.
In this paper we consider the case of a general open smooth surface with the
particular problem of incorporating an essential boundary condition.

This boundary condition appears in a natural way when dealing with boundary
integral equations on open surfaces. In the case of a Neumann problem the
unknown of the integral equation is the jump across the surface
of the solution to the exterior problem \cite{Stephan_87_BIE}.
Since the jump of this solution vanishes
at the boundary of the surface the underlying energy space has to incorporate
this condition. However, there is no well-defined trace operator in the
corresponding Sobolev space $H^{1/2}$ so that this condition appears as part of the
norm; the corresponding space is denoted by $\tilde H^{1/2}$, sometimes also
referred to as $H^{1/2}_{00}$. Conforming approximations require that approximating
functions vanish at the boundary of the surface.
This causes no difficulty when using
piecewise polynomials; the corresponding method is called the boundary element method
(BEM). When using radial basis functions, however, a conforming and converging
method is difficult to construct since conformity requires that the centers of
the functions stay away from the boundary.
This requirement is unrealistic because in practice centers can stay
close to or even on the boundary.
We thus propose a non-conforming
approach where the essential boundary condition is implemented by a Lagrangian
multiplier. For the BEM such procedures have been studied, with resulting
almost quasi-optimal convergence, see \cite{GaticaHH_09_BLM,HealeyH_10_MBE,HeuerS_09_CRB}.
Here we use a different approach where the surface is extended to a larger surface
so that supports of radial basis functions remain inside the extended surface.
We then use a Lagrangian
multiplier on the extended part of the surface to make the approximating functions
vanish there. This idea is similar to a penalty or fictitious domain method.

An overview of the rest of the paper is as follows.
In the next section we recall Sobolev spaces, present our model problem
and give an equivalent mixed formulation which will be used for the
discretization with radial basis functions. In Section~\ref{sec_discrete} we
introduce scaled radial basis functions and the discrete scheme, and we
list our theoretical results which are proved in subsequent sections.
In Section~\ref{sec_cea} we prove the quasi-optimal convergence of our scheme,
and in Section~\ref{sec_rbf} we present an approximation theory for scaled radial
basis functions.
In Section~\ref{sec_con} we resume our theoretical results
and conclude that the resulting convergence
order tends to the one of a standard BEM when the regularity of the radial
basis functions grows. This conclusion is based on numerical evidence of boundedness
of an additional stability term that arises in our analysis.
Finally, in Section~\ref{sec_num} we report on
some numerical results that underline the convergence properties of our method.

Throughout the paper, the symbol $\lesssim$ will be used in the usual
sense. In short, $a(h,k,r)\lesssim b(h,k,r)$ when there exists a constant
$C>0$ independent of discretization or scaling parameters $h$, $k$, $r$
(except otherwise noted) such that $a(h,k,r) \le C b(h,k,r)$. The double
inequality $a \lesssim b \lesssim a$ is simplified to $a \simeq b$.

\mysection{Model problem and mixed formulation}{sec_model}

In order to introduce our model problem and its mixed formulation we need
to recall the definition of some Sobolev spaces.

We consider standard Sobolev spaces of integer order and
define fractional order spaces by interpolation, using the real K-method,
see \cite{BerghL_76_IS}.
For a Lipschitz domain $\Omega$ and $0<s<1$ we use the spaces
\[
   H^s(\Omega):=[L_2(\Omega), H^1(\Omega)]_s,\qquad
   \tilde H^s(\Omega):=[L_2(\Omega), H_0^1(\Omega)]_s
\]
where the norm in $H_0^1(\Omega)$ is the $H^1(\Omega)$-semi-norm.
For orders $s>1$, the spaces are defined by interpolation between $L_2(\Omega)$
and a correspondingly higher integer order Sobolev space.
For $s\in (0,1/2)$, $\|\cdot\|_{\tilde H^s(\Omega)}$ and
$\|\cdot\|_{H^s(\Omega)}$ are equivalent norms whereas for $s\in(1/2,1)$
there holds $\tilde H^s(\Omega) = H_0^s(\Omega)$,
the latter space consisting of functions whose traces on $\partial\Omega$
vanish. Generally, $\tilde H^s(\Omega)$ consists of functions which
are continuously extendable by zero onto a larger domain.
For $s>0$ the spaces $H^{-s}(\Omega)$ and $\tilde H^{-s}(\Omega)$ are the
dual spaces of $\tilde H^s(\Omega)$ and $H^s(\Omega)$, respectively.
Similarly we define Sobolev spaces on surfaces.

In the analysis we need some more norms.
In the literature different definitions of Sobolev norms are being
used and we have to be careful to check their equivalence when combining
different results. Apart from interpolation norms,
on a domain $\Sigma\subset\R^2$ ($\Sigma=\R^2$ is allowed),
we also need the Sobolev-Slobodeckij norm. For $s=m+\sigma$ with
integer $m\ge 0$ and $\sigma\in (0,1)$ we define
\[
   \|v\|_{s,\Sigma}
   :=
   \Bigl(\|v\|_{m,\Sigma}^2 + |v|_{s,\Sigma}^2\Bigr)^{1/2}
\]
with semi-norm
\[
   |v|_{s,\Sigma}
   =
   \Bigl(
   \sum_{|\alpha|=m}\int_\Sigma\int_\Sigma
   \frac {|D^\alpha v(x) - D^\alpha v(y)|^2}{|x-y|^{2+2\sigma}}\,dx\,dy
   \Bigr)^{1/2}
\]
and multi-index $\alpha=(\alpha_1,\alpha_2)$.
Here, $\|\cdot\|_{m,\Sigma}$ is the standard Sobolev norm, as before,
\[
   \|v\|_{m,\Sigma}^2 = \sum_{|\alpha|\le m} \|D^\alpha v\|_{L_2(\Sigma)}^2.
\]
Our model problem is as follows.
{\em For given $f\in H^{-1/2}(\G)$ find $\phi\in\tHp(\G)$ such that}
\begin{equation}\label{IE}
   W\phi(x):=-\frac 1{4\pi}\frac {\partial}{\partial \bn_x}
              \int_{\G} \phi(y) \frac {\partial}{\partial \bn_y} \frac 1{|x-y|}
              \,dS_y
   = f(x),\quad x\in\G.
\end{equation}
Here, $\G$ is a smooth open surface with piecewise smooth Lipschitz boundary
and $\bn$ is a normal unit vector on $\G$. We refer to \cite{Stephan_87_BIE}
for the setting of this model problem and the error analysis of its
conforming approximation by boundary elements.

Later we will extend $\G$ to a larger
surface and use the notation $W$ for the hypersingular operator on the
extended surface as well.

For ease of presentation we restrict ourselves to the case of a flat surface
and assume that $\G$
is a polygonal Lipschitz domain in $\R^2$. Then, in particular, $\G$ satisfies
an interior cone condition.

We intend to approximate the solution $\phi$ to (\ref{IE}) by radial basis
functions.
The conformity condition that approximation spaces be subspaces of
$\tHp(\G)$ requires that discrete functions vanish at the boundary
$\g$ of $\G$. This means that points for the definition of radial
basis functions could not be freely selected since one wants to take
radial basis functions of uniform radius which cannot be too small,
as is well known. In order to be able to freely choose center points we extend
the domain $\G$ to a larger domain $\tG$ so that, for a given parameter
$r>0$
there holds $\dist(\g, \tg)>r$. Here, $\tg$ denotes the boundary of $\tG$
and we assume that $\tG$ is at least Lipschitz.
Later, the parameter $r$ will be the scaling parameter for the radial basis
functions. Scaling the basis functions appropriately, we will be able to
select center points anywhere on $\bar\G$ and the discrete spaces of
radial basis functions will be subspaces of $\tHp(\tG)$.

A standard variational formulation of (\ref{IE}) is: {\em find
$\phi\in\tHp(\G)$ such that}
\begin{equation}\label{weak_std}
   \<W\phi,\psi\>_\G = \<f,\psi\>_\G\qquad\forall\psi\in\tHp(\G).
\end{equation}
However, to give the setting for the discrete method we consider a
non-standard formulation on the extended domain $\tG$:
{\em find $(\tphi,\lam)\in\tHp(\tG)\times\tHm_\g(\Gc)$ such that}
\begin{equation}\label{weak}
\ba{llll}
   \<W\tphi,\psi\>_\tG - \<\lam,\psi\>_\Gc &=& \<f^0,\psi\>_\tG
   &\quad \forall \psi\in\tHp(\tG),\\
   \<\mu,\tphi\>_\Gc                       &=& 0
   &\quad \forall \mu\in\tHm_\g(\Gc).
\ea
\end{equation}
Here, $\Gc:=\tG\setminus\bar\G$ with boundary $\g\cup\tg$ consisting
of two connected components ($\Gc$ is an annular domain).
Also, we use the following notation for the dual space:
\[
   \tHm_\g(\Gc)
   :=
   \left(\tHp_{\partial\Gc\setminus\g}(\Gc)\right)'
   =
   \left(\tHp_\tg(\Gc)\right)'
\]
where
\[
   \tHp_\tg(\Gc)
   :=
   [L_2(\Gc), H_{0,\tg}^1(\Gc)]_{1/2}
\]
with $H_{0,\tg}^1(\Gc)$ being the space of $H^1(\Gc)$-functions whose
traces on $\tg$ vanish. In particular, any $\psi\in \tHp_\tg(\Gc)$
is extendable by zero to a function
$\psi^0\in H^{1/2}(\R^2\setminus\bar\G)$.
Below, we will use this notation, instead of $\Gc$, also on extended
domains $\Gck$ depending on a mesh parameter $k$.

Note that $\tHp(\G)=\tHp_\g(\G)$ and
\[
   \tHm_\g(\G)=(\tHp_\emptyset(\G))'=(H^{1/2}(\G))'=\tHm(\G).
\]
Also, for $f$ defined on $\G$, $f^0$ denotes the
extension of $f$ by $0$ onto $\tG$ (we will use this generic notation
throughout for the extension to any domain which will be clear from
the particular situation).

There obviously holds the following result.

\begin{prop} \label{prop1}
Let $f\in H^{-1/2}(\G)$ so that $f^0\in H^{-1/2}(\tG)$. Then the
formulations \eqref{weak_std} and \eqref{weak} are equivalent and have
unique solutions. There holds
$\tphi|_\G=\phi$ and $\tphi|_{\tG\setminus\bar\G}=0$,
i.e. $\tphi=\phi^0$, and $\lam=W\phi^0$ on $\tG\setminus\bar\G$.
\end{prop}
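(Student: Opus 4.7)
The plan is to base the proof on the classical well-posedness of \eqref{weak_std}: the hypersingular operator $W$ is symmetric, continuous, and $\tHp(\G)$-elliptic (see \cite{Stephan_87_BIE}), so Lax--Milgram provides a unique $\phi\in\tHp(\G)$ solving \eqref{weak_std}. I would then establish the equivalence with \eqref{weak} in two directions and read off uniqueness for \eqref{weak} from that for \eqref{weak_std}.

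For the implication \eqref{weak_std}$\Rightarrow$\eqref{weak}, I set $\tphi:=\phi^0\in\tHp(\tG)$. The second equation of \eqref{weak} is immediate since $\tphi$ vanishes on $\Gc$. For the first equation, I split $\<W\tphi,\psi\>_\tG=\<W\phi^0,\psi\>_\G+\<W\phi^0,\psi\>_\Gc$ and use that the integral kernel defining $W\phi^0$ only ``sees'' $\phi$ on $\G$, so on $\G$ the operator $W\phi^0$ coincides with $W\phi$. This gives $\<W\phi^0,\psi\>_\G=\<f,\psi|_\G\>_\G=\<f^0,\psi\>_\tG$, and the natural choice $\lam:=W\phi^0|_\Gc$ makes \eqref{weak} hold. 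To confirm $\lam\in\tHm_\g(\Gc)$ (rather than merely in $H^{-1/2}(\Gc)$), I would define it by duality: for $\mu\in\tHp_\tg(\Gc)$ pick any bounded extension $\bar\mu\in\tHp(\tG)$ and set $\<\lam,\mu\>_\Gc:=\<W\phi^0,\bar\mu\>_\tG-\<f^0,\bar\mu\>_\tG$; independence of the choice of $\bar\mu$ follows from \eqref{weak_std} applied to the difference of two extensions, which is supported in $\bar\G$.

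For the reverse implication \eqref{weak}$\Rightarrow$\eqref{weak_std}, testing the second equation of \eqref{weak} against all $\mu\in\tHm_\g(\Gc)$ forces $\tphi|_\Gc=0$ in $\tHp_\tg(\Gc)$, so $\tphi=\phi^0$ where $\phi:=\tphi|_\G\in\tHp(\G)$. Testing the first equation with $\psi=\psi_0^0$ for arbitrary $\psi_0\in\tHp(\G)$ annihilates the multiplier term (because $\psi_0^0$ vanishes on $\Gc$) and reduces the identity to \eqref{weak_std} for $\phi$, so $\phi$ is the unique solution from the standard formulation. Uniqueness of $\tphi$ is then inherited, and uniqueness of $\lam$ follows by fixing $\tphi$: if two multipliers $\lam_1,\lam_2$ both satisfy the first equation, then $\<\lam_1-\lam_2,\psi\>_\Gc=0$ for every $\psi\in\tHp(\tG)$, which forces $\lam_1=\lam_2$ in $\tHm_\g(\Gc)$ by the surjectivity of the restriction $\tHp(\tG)\to\tHp_\tg(\Gc)$.

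The main technical obstacle is the careful bookkeeping across the interface $\g$: verifying that $W\phi^0|_\Gc$ genuinely belongs to the asymmetric space $\tHm_\g(\Gc)$, and that the restriction $\tHp(\tG)\to\tHp_\tg(\Gc)$ admits a bounded right inverse. Both reduce to standard facts about extension operators in fractional Sobolev spaces defined by interpolation, which is presumably why the authors characterize the result as obvious.
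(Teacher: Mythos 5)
Your proof is correct. The paper itself offers no argument for Proposition~\ref{prop1} (it is introduced with ``There obviously holds the following result''), and what you write is precisely the standard reasoning the authors are implicitly invoking: well-posedness of \eqref{weak_std} via ellipticity of $W$ on $\tHp(\G)$, the identification $\tphi=\phi^0$ forced by the second equation of \eqref{weak}, recovery of \eqref{weak_std} by testing with zero-extensions $\psi_0^0$, and uniqueness of $\lam$ from surjectivity of the restriction $\tHp(\tG)\to\tHp_\tg(\Gc)$. You also correctly isolate the one point that is not entirely cosmetic: the naive splitting $\<W\phi^0,\psi\>_\tG=\<W\phi^0,\psi\>_\G+\<W\phi^0,\psi\>_\Gc$ is delicate because $W\phi^0$ has an edge singularity along $\g$, and your duality definition of $\lam$ via bounded extensions of $\tHp_\tg(\Gc)$-functions (with well-definedness checked against \eqref{weak_std}) is the clean way to see that $\lam$ lands in the asymmetric space $\tHm_\g(\Gc)$ and coincides with $W\phi^0$ on the open strip.
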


\mysection{Discrete method and theoretical results}{sec_discrete}

We solve \eqref{weak} by approximating $\tphi$ by radial basis functions
and $\lam$ by piecewise constant functions. To this end let
$\Phi$ denote a non-negative radial basis function centered around $x=0$
with compact support $\bar B(0,1)$
($B(y,s)$ is the disc $\{x\in\R^2;\;|x-y|<s\}$)
and Fourier transform
\begin{equation}\label{hatPhi}
   \CF(\Phi)(\xi) = \hat\Phi(\xi)
   \simeq (1+|\xi|^2)^{-\tau},\quad \xi\in\R^2
\end{equation}
for $\tau>1$. The parameter $\tau$ is fixed throughout.
We consider scaled radial basis functions
\[
   \Phi_r(x):= r^{-2}\Phi(x/r),\quad r>0, \quad x\in\R^2,
\]
so that
\begin{equation}\label{hatPhir}
   \hat\Phi_r(\xi) \simeq (1+r^2|\xi|^2)^{-\tau}.
\end{equation}
Selecting a finite set of nodes $X:=\{x_1,\ldots,x_N\}\subset\bar\G$
we define the discrete space
\[
   H_{X,r} := \spn\{\phi_1,\ldots,\phi_N\}
   \quad\mbox{where}\quad
   \phi_i(x):=\Phi_r(x-x_i),\ i=1,\ldots,N.
\]
Since the nodes can be near to or even on the boundary of
$\Gamma$, the supports of the scaled radial basis functions
are not necessarily subsets of $\Gamma$. We extend
$\Gamma$ to a fixed larger domain
$\tG$ satisfying $\supp(\phi_i)\subset\tG$ for any $\phi_i\in
H_{X,r}$, any chosen set $X$ and any $r\in (0,r_0]$ where
$r_0>0$ is fixed.

We also need the {\em mesh norm} $h_{X,\Sigma}$ (for $\Sigma\subset\R^2$)
defined by
\[
   h_{X,\Sigma} := \sup_{x\in\Sigma}\dist(x,X).
\]

We extend $\G$ by a strip of shape-regular,
quasi-uniform quadrilateral elements $T$ of diameter
proportional to $k$ as indicated
in Figure~\ref{fig_ext}.
We require that the minimum diameter and length of the smallest edge on
$\g$ are not smaller than $k$. With shape-regularity we refer to elements
whose minimum (respectively, maximum) interior angles are
bounded from below (respectively, from above) by a positive
constant less than $\pi$.
We denote this mesh by $\CT_k$ and require that

\noindent{\bf $\CT_k$ is geometrically conforming with $\g$}:
each element $T\in\CT_k$ has either one or more entire edges in common
with $\g$, or $\bar T\cap\g$ is a vertex of $\G$ and of $T$. The latter
case happens at most once for each vertex of $\G$.

The extended domain is denoted by $\tGk$:
\[
   \tGk={\rm interior}(\bar\G\cup\{\bar T;\; T\in\CT_k\}).
\]
In the following, we choose the mesh size $k$ accordingly to the scaling
parameter $r$: $k>r$ and $k$ is small enough so that $\tGk\subset\tG$ and
no element touches two vertices of $\G$.
The assumption $k>r$ guarantees that the supports of the scaled
radial basis functions are within $\tGk$.

We introduce the notation $\Gck$ for the strip
$\tGk\setminus\bar\G$ and $\tgk:=\partial\tGk$ for the boundary of
$\tGk$.

Let us collect the assumptions we have made:
\begin{itemize}
\item[(A1)] The scaling parameter $r$ and mesh parameter $k$ are bounded,
            $r\in (0,r_0]$, $r<k<k_0$.
            $\CT_k$ is a quasi-uniform mesh of shape-regular quadrilaterals
            of diameter proportional to $k$ with minimum
            diameter not less than $k$ and
            with edges on $\g$ of length not less than $k$. Moreover, $\CT_k$
            is geometrically conforming with $\g$ and no element touches
            two vertices of $\G$.
\end{itemize}

\begin{figure}[htb]
\begin{center}
\includegraphics[width=0.5\textwidth]{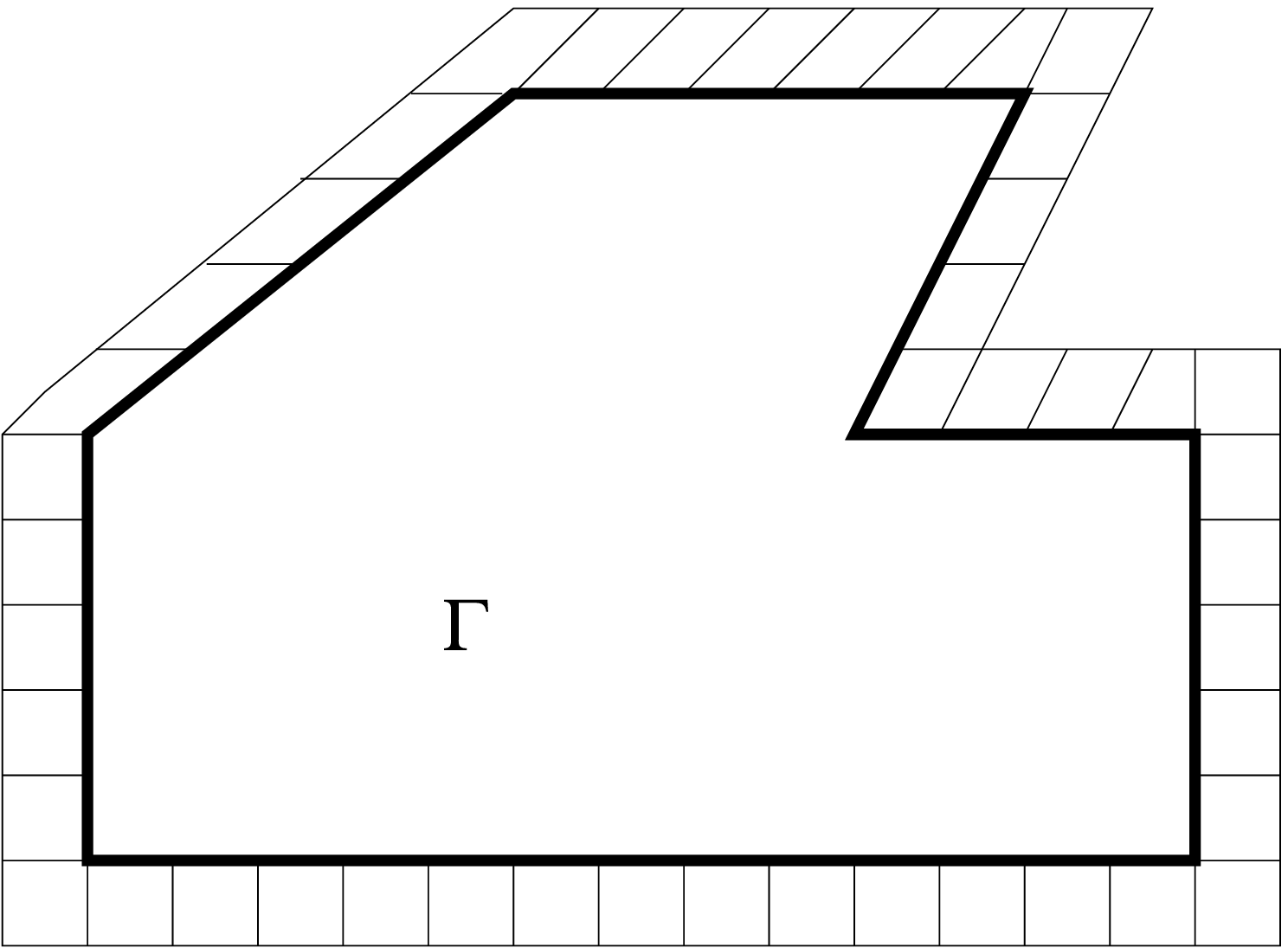}
\end{center}
\caption{Domain $\G$ extended by strip $\Gck$ with mesh $\CT_k$.}
\label{fig_ext}
\end{figure}

Now, for the approximation of the Lagrangian multiplier $\lam$ we
take piecewise constant functions,
\[
   M_k := \{\mu\in L_2(\Gck);\; \mu|_T=const\ \forall T\in\CT_k\}.
\]
Using these discrete spaces, the boundary element scheme with
radial basis functions and Lagrangian multiplier for the approximate
solution of (\ref{weak}) is:
{\em find $(\phi_N,\lam_k)\in H_{X,r}\times M_k$ such that}
\begin{equation}\label{BEM}
\ba{llll}
   \<W\phi_N,\psi\>_\tGk - \<\lam_k,\psi\>_\Gck &=& \<f^0,\psi\>_\tGk
   &\quad \forall \psi\in H_{X,r},\\
   \<\mu,\phi_N\>_\Gck                          &=& 0
   &\quad \forall \mu\in M_k.
\ea
\end{equation}
Defining the subspace
\[
   V_{X,r}
   :=
   \{\psi\in H_{X,r};\; \<\mu,\psi\>_\Gck = 0\quad\forall\mu\in M_k\},
\]
{and assuming an inf-sup condition for
the bilinear form $\<\cdot,\cdot\>_\Gck$ on $M_k\times H_{X,r}$,}
\eqref{BEM} is equivalent to: {\em find $\phi_N\in V_{X,r}$ such that}
\[
   \<W\phi_N,\psi\>_\tGk = \<f^0,\psi\>_\tGk
   \quad\forall \psi\in V_{X,r}.
\]

Apart from assumption (A1) we will need some more properties of $X$
for our analysis. We must be able to fix a constant function on any
given element by testing with a radial basis function.

To make this precise, let $\CC\subset\CT_k$ denote the set of
elements $T$ such that the closure $\overline T$ intersects $\g$
only at a vertex of $\G$.
For instance, in the example of Figure~\ref{fig_ext} there are five such
elements, at five of the six convex vertices.
Further we denote $\CE:=\CT_k\setminus\CC$. Then we assume:

\begin{itemize}
\item[(A2)]\quad
\(\displaystyle
   \forall T\in\CE\quad \exists x_i\in X:\quad
   \supp(\phi_i)\cap\Gck \subset T
\)
\item[(A3)]\quad
\(\displaystyle
   \forall T\in\CC:\quad T\mbox{ has exactly two neighbors}\ T_1,T_2\in\CE,
\)\\
\hspace*{2.5em}
\(\displaystyle
   \qquad\quad \exists x_i\in X:\quad
   \supp(\phi_i)\cap T \not=\emptyset,\quad
   \supp(\phi_i)\cap \tilde T =\emptyset\
   \forall\tilde T\in\CT_k\setminus\{T,T_1,T_2\}
\)
\end{itemize}
For any $T\in\CT_k$ there may be more than one $x_i\in X$
satisfying either (A2) or (A3). We will denote one of these
points $x_i$ by $x_{i(T)}$.

Moreover, we assume that there is {\em substantial} overlap between
$H_{X,r}$ and $M_k$:
\begin{itemize}
\item[(A4)]\quad
\(\displaystyle
   \exists\kappa>0:\quad
   \meas(\supp(\phi_{i(T)})\cap T)
   \ge
   \kappa\,\meas(\supp(\phi_{i(T)}))
   =\kappa\, \pi r^2\quad
   \forall T\in\CT_k
\)
\end{itemize}

\begin{remark}
Note that the discrete scheme is defined on the domain $\tGk$ which depends
on $k$. But this causes no difficulty since, under the assumptions made,
all the domains $\tGk$ can be extended to the fixed domain $\tG$ and the
spaces involved allow for extension by zero of their elements to $\tG$.
More precisely, there holds
\[
   H_{X,r}\subset \tHp(\tGk)\subset \tHp(\tG),\quad
   M_k    \subset \tHm_\g(\Gck) \subset \tHm_\g(\Gc)
\]
where the latter inclusions are to be understood as the uniformly continuous
injections of the respective extension by zero.
\end{remark}

\paragraph{Theoretical results.}
Based on assumptions {\rm (A1)--(A4)} we prove the following results.
\begin{itemize}
\item The discrete scheme (\ref{BEM}) converges quasi-optimally;
      see Theorem~\ref{thm_cea1}.
\item The error of the best approximation by radial basis functions
      in the constrained space $V_{X,r}$ (mean value zero on elements of the extension)
      can be bounded by the error of the best approximation in the
      unconstrained space $H_{X,r}$ on $\G$ plus a stability term;
      see Theorem~\ref{thm_cea}.
\item We prove an error estimate for the best approximation by scaled
      radial basis functions (Theorem~\ref{thm_approx}) which appears to be
      sharp according to our numerical results in Section~\ref{sec_num}.
\end{itemize}

We are unable to show an appropriate bound for the stability term
$|\psi|_{H^t(\Gck)}$ in Theorem~\rref{thm_cea}. Based on our numerical tests, however,
we conjecture that this term is appropriately bounded so that we conclude
for the choice $k\simeq r\simeq h_{X,\G}^{1-t/\tau}$ the overall error estimate
\begin{equation} \label{main}
   \|\phi-\phi_N\|_{H^{1/2}(\G)}
   \le
   C\, h_{X,\G}^{(t-1/2-\epsilon)(1-t/\tau)}
   \quad \epsilon>0, t\in(1/2,1].
\end{equation}
The constant $C$ would depend on $t$, $\epsilon$ and $\tau$
but not on $X$ and $h_{X,\G}$ under the assumptions made.
We refer to Section~\ref{sec_con}, in particular \eqref{main_con}, for more details.

\begin{remark}
In our case of an open surface $\G$, the solution $\phi$ of \eqref{IE}
has strong singularities along $\gamma$. This limits the convergence order of
approximation schemes. Measuring regularity in standard Sobolev spaces,
the $h$-version of the standard boundary element method with quasi-uniform
meshes (and mesh size $h$) converges like
\begin{equation}\label{est_std}
   \|\phi-\phi_h\|_{\tHp(\G)} \lesssim h^{t-1/2} \|\phi\|_{H^t(\G)},
\end{equation}
cf. \rcite{vonPetersdorffS_90_DEC}. Since $\phi\in H^t(\G)$ for any
$t<1$ but $\phi\not\in H^1(\G)$ in general there is an upper limit $1/2$ for
the convergence order in $h$. An optimal error estimate making use of the
type of appearing singularities is
\[
   \|\phi-\phi_h\|_{\tilde H^{1/2}(\G)}
   \lesssim h^{1/2},
\]
again for quasi-uniform meshes, see \rcite{BespalovH_08_hpB}.
In our case of radial basis functions the mesh size $h$ corresponds to the
mesh norm $h_{X,\G}$ and for quasi-uniform distribution of nodes this
parameter is equivalent to $h$ for quasi-uniform meshes. The estimate
\eqref{main} exactly reflects the error estimate
\eqref{est_std} for large $\tau$. Selecting sufficiently smooth
radial basis functions, which corresponds to large $\tau$, one gets as close
as wanted to the convergence order $1/2$.
{The use of lower regularity of radial basis functions results in a lower
convergence order.}
Our numerical experiments reported below confirm the predicted
influence of $\tau$.
\end{remark}

\mysection{Quasi-optimal convergence}{sec_cea}

We prove the quasi-optimal convergence of \eqref{BEM} for
the approximation of $\phi$. Later, in Section~\ref{sec_rbf}
we study the approximation problems
for $\phi$ and $\lam$ to derive convergence orders. We do not bound
the Galerkin error for the Lagrangian multiplier $\lam$ since, on the
one hand, proving a discrete inf-sup condition for the bilinear forms
$\<\cdot,\cdot\>_\Gck$ is an open problem and, on the other hand, the function
$\lam$ is of no physical interest. We will therefore analyze \eqref{BEM} without
using a discrete inf-sup condition.

\begin{theorem} \label{thm_cea1}
Let the assumptions {\rm (A1)--(A3)} be satisfied.
Then there exists a unique solution to \eqref{BEM} and there
holds the quasi-optimal error estimate
\[
   \|\phi^0-\phi_N\|_{\tHp(\tGk)}
   \lesssim
   \inf_{\psi\in V_{X,r}} \|\phi^0-\psi\|_{\tHp(\tGk)}
   +
   \inf_{\mu\in M_k} \|\lam-\mu\|_{\tHm_\g(\Gck)}.
\]
\end{theorem}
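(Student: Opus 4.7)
The plan is to reduce the mixed system to an elliptic problem on the kernel $V_{X,r}$ and then apply a Strang-type argument.

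First I would establish the analytic framework. The hypersingular operator $W$ is continuous and elliptic on $\tHp(\Sigma)$ for any open Lipschitz surface $\Sigma$ containing $\G$. Since $\G\subset \tGk\subset\tG$ and functions in $\tHp(\tGk)$ extend by zero to $\tHp(\tG)$ isometrically (up to equivalent norms), both properties are inherited by the bilinear form $\<W\cdot,\cdot\>_\tGk$ with constants independent of $k$. This is what I need uniformly in the discretization parameters.

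Next I would deal with existence and uniqueness. Observe that testing \eqref{BEM} with $\psi\in V_{X,r}$ makes the $\lam_k$-term vanish by the definition of $V_{X,r}$, so the reduced problem
\[
   \<W\phi_N,\psi\>_\tGk = \<f^0,\psi\>_\tGk \qquad \forall\psi\in V_{X,r}
\]
has a unique solution $\phi_N\in V_{X,r}$ by Lax--Milgram. Uniqueness of $\lam_k$ then follows from a local inf-sup condition for $\<\cdot,\cdot\>_\Gck$ on $M_k\times H_{X,r}$: if $\mu\in M_k$ annihilates every $\psi\in H_{X,r}$, then testing with the localized basis functions $\phi_{i(T)}$ provided by assumption (A2) on each $T\in\CE$, and by assumption (A3) on each $T\in\CC$ (where the support touches only $T$, $T_1$, $T_2$), one reconstructs the constant values of $\mu$ element by element. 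Solving this small local linear system uses the shape-regularity in (A1) and forces $\mu=0$; hence $\lam_k$ is uniquely determined.

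For the a priori bound I would derive the error equation and apply the standard Cea argument. By Proposition~\ref{prop1}, $\tphi=\phi^0$, and for any $\psi\in H_{X,r}$ (extended by zero to $\tG$) the continuous formulation \eqref{weak} reads
\[
   \<W\phi^0,\psi\>_\tGk - \<\lam,\psi\>_\Gck = \<f^0,\psi\>_\tGk,
\]
since $\psi$ vanishes on $\Gc\setminus\Gck$. For $\psi\in V_{X,r}$ and any $\mu\in M_k$, subtracting $\<\mu,\psi\>_\Gck=0$ and then subtracting the reduced discrete equation yields
\[
   \<W(\phi^0-\phi_N),\psi\>_\tGk = \<\lam-\mu,\psi\>_\Gck \qquad \forall\psi\in V_{X,r},\;\forall \mu\in M_k.
\]
Pick arbitrary $\chi\in V_{X,r}$, write $\chi-\phi_N\in V_{X,r}$, apply ellipticity of $W$ to $\chi-\phi_N$, and split
\[
   \<W(\chi-\phi_N),\chi-\phi_N\>_\tGk = \<W(\chi-\phi^0),\chi-\phi_N\>_\tGk + \<\lam-\mu,\chi-\phi_N\>_\Gck.
\]
Continuity of $W$ on $\tHp(\tGk)$ handles the first term, while the duality pairing $\tHm_\g(\Gck)\times\tHp_\tgk(\Gck)$ handles the second (noting that the restriction to $\Gck$ of any element of $\tHp(\tGk)$ lies in $\tHp_\tgk(\Gck)$ because its trace on $\tgk$ vanishes). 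Triangle inequality and the infimum over $\chi\in V_{X,r}$, $\mu\in M_k$ then give the claimed quasi-optimal estimate.

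The main obstacle I expect is the uniqueness of the discrete multiplier, i.e.\ verifying the local inf-sup condition from (A2)--(A3) alone. The corner elements $T\in\CC$ are the delicate case, because the corresponding $\phi_{i(T)}$ may have support intersecting three elements; one has to argue in the right order (first on elements of $\CE$, then propagate to $\CC$) so that the contribution from the neighbors $T_1,T_2\in\CE$ in (A3) is already known to vanish. All other steps are standard once continuity/ellipticity of $W$ and the error equation above are in hand.
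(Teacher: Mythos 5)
Your proposal is correct and follows essentially the same route as the paper: uniform continuity and ellipticity of $\<W\cdot,\cdot\>_\tGk$ via zero extension to $\tG$, discrete injectivity of $\<\cdot,\cdot\>_\Gck$ on $M_k$ obtained from (A2)--(A3) in exactly the same order (elements of $\CE$ first, then propagation to the corner elements of $\CC$), and a Strang/C\'ea argument on the kernel $V_{X,r}$ using the error identity $\<W(\phi^0-\phi_N),\varphi\>_\tGk=\<\lam-\mu,\varphi\>_\Gck$ together with the duality bound in $\tHm_\g(\Gck)\times\tHp_\tgk(\Gck)$. The only cosmetic differences are that the paper packages solvability via the Babu\v ska--Brezzi theory rather than Lax--Milgram on the kernel, and phrases the Strang estimate through a supremum over $V_{X,r}$ instead of testing directly with $\chi-\phi_N$.
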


\begin{proof}
The existence and uniqueness of $(\phi_N,\lam_k)\in H_{X,r}\times M_k$
follows from the {Babu\v ska}-Brezzi theory. Specifically,
{uniformly in $k$},
\begin{itemize}
\item[(i)] $\<W\cdot,\cdot\>_\tGk$ is bounded:
\begin{equation}\label{Wbound}
   \<Wv,\psi\>_\tGk = \<Wv^0,\psi^0\>_\tG
   \lesssim
   \|v^0\|_{\tHp(\tG)} \|\psi^0\|_{\tHp(\tG)}
   \simeq
   \|v\|_{\tHp(\tGk)} \|\psi\|_{\tHp(\tGk)}
\end{equation}
for any $v, \psi\in\tHp(\tGk)$
\item[(ii)] $\<W\cdot,\cdot\>_\tGk$ is elliptic:
\begin{equation}\label{ell}
   \<W\psi,\psi\>_\tGk = \<W\psi^0,\psi^0\>_\tG
   \gtrsim
   \|\psi^0\|_{\tHp(\tG)}^2
   \simeq
   \|\psi\|_{\tHp(\tGk)}^2
   \quad\forall\psi\in\tHp(\tGk)
\end{equation}
\item[(iii)] $\<\cdot,\cdot\>_\Gck$ is bounded:
\begin{equation}\label{bound}
   \<\mu, \psi\>_\Gck
   \le
   \|\mu\|_{\tHm_\g(\Gck)} \|\psi\|_{\tHp_\tgk(\Gck)}
   \le
   \|\mu\|_{\tHm_\g(\Gck)} \|\psi\|_{\tHp(\tGk)}
\end{equation}
for any $\mu\in\tHm_\g(\Gck)$ and $\psi\in\tHp(\tGk)$
\item[(iv)] The linear form defined by $f^0$ is bounded, i.e.
\[
   \<f^0, \psi\>_\tGk
   \le
   \|f^0\|_{H^{-1/2}(\tGk)} \|\psi\|_{\tHp(\tGk)}
   \lesssim
   \|f^0\|_{H^{-1/2}(\tG)} \|\psi\|_{\tHp(\tGk)}
   \quad\forall \psi\in\tHp(\tGk).
\]
\end{itemize}
Here, we used several uniform norm-equivalences, e.g.
$\|v\|_{\tHp(\tGk)}\simeq\|v^0\|_{\tHp(\tG)}$
since $\tHp(\tGk)$ (resp., $\tHp(\tG)$) is defined by interpolation
between $L_2(\tGk)$ and $H_0^1(\tGk)$ (resp, between
$L_2(\tG)$ and $H_0^1(\tG)$), and
\[
   \|v\|_{H_0^1(\tGk)}
   =
   |v|_{H^1(\tGk)}
   =
   |v^0|_{H^1(\tG)}
   =
   \|v^0\|_{H_0^1(\tG)}
   \qquad \forall v\in H_0^1(\tGk).
\]
Rather than proving a discrete inf-sup condition for the bilinear form
$\<\cdot,\cdot\>_\Gck$ we only show injectivity. More precisely,
\begin{itemize}
\item[(v)] $\<\cdot,\cdot\>_\Gck$ is discrete injective:
\begin{equation}\label{inj}
   \Bigl(\mu\in M_k:\quad \<\mu, \psi\>_\Gck = 0
         \quad\forall\psi\in H_{X,r}\Bigr)
   \qquad\Rightarrow\qquad \mu=0
\end{equation}
To see this we proceed as follows. Let $\mu=\sum_{T\in\CT_k}c_T\chi_T$
(with characteristic function $\chi_T$ on element $T$) satisfy \eqref{inj}.
For an element $T\in\CE$, i.e. $T$ has at least an entire edge in
common with $\g$, there exists, due to assumption (A2), a basis function
$\phi_i\in H_{X,r}$ whose support overlaps only with the element $T$.
Then $\<\mu, \phi_i\>_\Gck=c_T\<\chi_T, \phi_i\>_\Gck=0$ so that $c_T=0$.
Therefore, $\mu$ vanishes on all those elements. Elements not having an
entire edge in common with $\g$ can only be at convex vertices of $\G$
and are isolated. By assumption (A3) we can again choose a basis function
for each of those elements $T$, this time with the only condition that there
is some overlap between $T$ and the support of the corresponding basis
functions. Since we already know that $\mu$ vanishes on the neighboring
elements, the argument from before implies that $\mu$ vanishes also on
the remaining vertex elements. This proves \eqref{inj}.
\end{itemize}

The {Babu\v ska}-Brezzi theory then implies that there exists a unique solution
$(\phi_N,\lam_k)$ of \eqref{BEM}.

To prove the quasi-optimal convergence we first derive a Strang-type
error estimate. Following the standard procedure,
we use the triangle inequality and the uniform ellipticity \eqref{ell}
to conclude that for any $\psi\in V_{X,r}$ there holds
\begin{align*}
\lefteqn{
   \|\phi^0-\phi_N\|_{\tHp(\tGk)}
   \le
   \|\phi^0-\psi\|_{\tHp(\tGk)} + \|\phi_N-\psi\|_{\tHp(\tGk)}
}
   \\
   &\lesssim
   \|\phi^0-\psi\|_{\tHp(\tGk)}
   +
   \sup_{\varphi\in V_{X,r}\setminus\{0\}}
   \frac {\<W(\phi_N-\psi), \varphi\>_\tGk}
         {\|\varphi\|_{\tHp(\tGk)}}
   \nonumber
   \\
   &\lesssim
   \|\phi^0-\psi\|_{\tHp(\tGk)}
   +
   \\
   &\qquad
   \sup_{\varphi\in V_{X,r}\setminus\{0\}}
   \frac {\<W(\phi^0-\psi), \varphi\>_\tGk}
         {\|\varphi\|_{\tHp(\tGk)}}
   +
   \sup_{\varphi\in V_{X,r}\setminus\{0\}}
   \frac {\<W(\phi^0-\phi_N), \varphi\>_\tGk}
         {\|\varphi\|_{\tHp(\tGk)}}.
\end{align*}
Then, using the uniform boundedness \eqref{Wbound}, we find
\begin{equation}\label{strang}
   \|\phi^0-\phi_N\|_{\tHp(\tGk)}
   \lesssim
   \inf_{\psi\in V_{X,r}} \|\phi^0-\psi\|_{\tHp(\tGk)}
   +
   \sup_{\varphi\in V_{X,r}\setminus\{0\}}
   \frac {\<W(\phi^0-\phi_N), \varphi\>_\tGk}
         {\|\varphi\|_{\tHp(\tGk)}}.
\end{equation}
This is the Strang-type error estimate for a non-conforming approximation.
For a conforming method the latter term above vanishes.

It remains to bound the non-conformity term.
Combining \eqref{weak} and \eqref{BEM} one finds that there holds
(note that $\tilde\phi=\phi^0$ by Proposition~\ref{prop1})
\[
   \<W(\phi^0-\phi_N), \varphi\>_\tGk
   =
   \<\lam-\lam_k, \varphi\>_\Gck
   =
   \<\lam-\mu, \varphi\>_\Gck
   \quad\forall \varphi\in V_{X,r},\ \forall\mu\in M_k.
\]
Combination with \eqref{strang} and application of \eqref{bound}
yields
\[
   \|\phi^0-\phi_N\|_{\tHp(\tGk)}
   \lesssim
   \inf_{\psi\in V_{X,r}} \|\phi^0-\psi\|_{\tHp(\tGk)}
   +
   \inf_{\mu\in M_k} \|\lam-\mu\|_{\tHm_\g(\Gck)}.
\]
This finishes the proof.
\end{proof}

In the next step we analyze the approximation error in the
constrained space.

\begin{lemma} \label{la_approx}
Let the assumptions {\rm(A1)--(A4)} be satisfied and let $s,t\in (1/2,1]$
with $s<t$. Then for any $\phi\in\tilde H^t(\G)$ there holds
\[
   \inf_{\psi\in V_{X,r}} \|\phi^0-\psi\|_{\tilde H^s(\tGk)}
   \le
   C(s,t)
   \inf_{\psi\in H_{X,r}}
   \Bigl(
      \|\phi-\psi\|_{H^s(\G)}
      +
      \delta(s,t,k,r) |\psi|_{H^t(\Gck)}
   \Bigr)
\]
with
\[
   \delta(s,t,k,r)
   :=
   k^{1+t} r^{-(1+s)} + k^{t-s} \bigl( 1+ k^{1+t} r^{-(1+t)} \bigr).
\]
\end{lemma}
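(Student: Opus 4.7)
The plan is a Fortin-type reduction. Given any $\psi\in H_{X,r}$, I construct a correction $\chi\in H_{X,r}$ so that $\tilde\psi:=\psi-\chi$ satisfies $\int_T\tilde\psi=0$ for every $T\in\CT_k$, hence $\tilde\psi\in V_{X,r}$, and then apply the triangle inequality
\[
   \|\phi^0-\tilde\psi\|_{\tilde H^s(\tGk)}
   \le
   \|\phi^0-\psi\|_{\tilde H^s(\tGk)} + \|\chi\|_{\tilde H^s(\tGk)}.
\]
I write $\chi=\sum_{T\in\CT_k} c_T\phi_{i(T)}$ using the distinguished basis functions provided by (A2) and (A3): for $T\in\CE$ the support of $\phi_{i(T)}$ meets $\Gck$ only inside $T$, which decouples the corresponding equations, while for $T\in\CC$ there is a local coupling to the two neighboring $\CE$-elements that can be resolved by first solving on $\CE$ and then adjusting at the corner elements. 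Assumption (A4) together with a positivity property of $\Phi$ on an inner sub-ball of its support gives $\int_T\phi_{i(T)}\gtrsim 1$, so that $|c_T|\lesssim \bigl|\int_T\psi\bigr|\lesssim k\,\|\psi\|_{L_2(T)}$ by Cauchy-Schwarz.

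To estimate $\|\chi\|_{\tilde H^s(\tGk)}$, I combine the scaling identities $\|\phi_{i(T)}\|_{L_2}\simeq r^{-1}$ and $|\phi_{i(T)}|_{H^1}\simeq r^{-2}$ (which follow from \eqref{hatPhir} and the dilation $\phi_{i(T)}(x)=r^{-2}\Phi((x-x_{i(T)})/r)$) with the bounded overlap of the supports to obtain $\|\chi\|_{L_2(\tGk)}\lesssim (k/r)\|\psi\|_{L_2(\Gck)}$ and $|\chi|_{H^1(\tGk)}\lesssim (k/r^2)\|\psi\|_{L_2(\Gck)}$. The key ingredient is that $k>r$ keeps the supports of all basis functions separated from $\tgk$, so every $\psi\in H_{X,r}$ vanishes in an open neighborhood of $\tgk$; a scaling argument on the strip $\Gck$ of width $\simeq k$ combined with the fractional Poincaré inequality then yields $\|\psi\|_{L_2(\Gck)}\lesssim k^t|\psi|_{H^t(\Gck)}$ for $t\in(1/2,1]$. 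Interpolating the $L_2$ and $H_0^1$ bounds via $[L_2(\tGk),H_0^1(\tGk)]_s$ produces
\[
   \|\chi\|_{\tilde H^s(\tGk)}\lesssim k^{1+t}r^{-(1+s)}|\psi|_{H^t(\Gck)},
\]
which is exactly the first term of $\delta(s,t,k,r)$.

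The remaining and more delicate step is to bound $\|\phi^0-\psi\|_{\tilde H^s(\tGk)}$ by $\|\phi-\psi\|_{H^s(\G)}$ plus the other two terms of $\delta$. Since $\phi^0-\psi$ vanishes on $\tgk$, its $\tilde H^s(\tGk)$-norm is equivalent to the Sobolev-Slobodeckij norm; the $L_2$ contribution is immediate from $\|\phi^0-\psi\|_{L_2(\tGk)}^2=\|\phi-\psi\|_{L_2(\G)}^2+\|\psi\|_{L_2(\Gck)}^2$ together with the Poincaré estimate above. Splitting the double integral defining the fractional semi-norm into contributions from $\G\times\G$, $\Gck\times\Gck$ and the cross region, the $\G\times\G$ piece is bounded directly by $\|\phi-\psi\|_{H^s(\G)}^2$; the $\Gck\times\Gck$ piece is controlled by $|\psi|_{s,\Gck}^2\lesssim k^{2(t-s)}|\psi|_{H^t(\Gck)}^2$ through the same rescaling plus Poincaré argument, giving the middle $k^{t-s}$ contribution in $\delta$; and the cross integral across the internal interface $\g$, after exploiting $\phi^0=0$ on $\Gck$ and rewriting $(\phi^0-\psi)(x)-(\phi^0-\psi)(y)$ symmetrically for $x\in\G$, $y\in\Gck$, produces the final term $k^{1+2t-s}r^{-(1+t)}$ via a combination of weighted $L_2$ estimates along $\g$ and the fractional Poincaré on the thin strip. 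The main obstacle is precisely this cross integral: neither $\phi$ nor $\psi$ has a convenient vanishing property at $\g$, and the nonlocality of the fractional semi-norm prevents any clean decomposition across the interface, so the bound has to be obtained by carefully balancing the strip width $k$ against the RBF scale $r$, which is what dictates the specific form of $\delta$.
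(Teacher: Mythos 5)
Your overall strategy coincides with the paper's: pick an (unconstrained) approximant, correct it by a combination $\sum_T c_T\phi_{i(T)}$ of the distinguished basis functions from (A2)--(A3) to enforce $\int_T(\cdot)=0$, bound the corrector by scaling ($\|\phi_{i(T)}\|_{L_2}\simeq r^{-1}$, $|\phi_{i(T)}|_{H^1}\simeq r^{-2}$), bounded overlap and a Poincar\'e--Friedrichs inequality on the strip (valid because $r<k$ forces every $\psi\in H_{X,r}$ to vanish near $\tgk$), and then localize the fractional norm of the remainder. Your bound $\|\chi\|_{\tilde H^s(\tGk)}\lesssim k^{1+t}r^{-(1+s)}|\psi|_{H^t(\Gck)}$ reproduces the paper's first term of $\delta$ correctly. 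One small slip: the $c_T$-system is triangular with the \emph{corner} elements resolved first (the equation on $T\in\CC$ involves only $c_T$, while the equations on its $\CE$-neighbors involve both), so "first solving on $\CE$ and then adjusting at the corner elements" is the wrong order; adjusting $c_T$ afterwards would destroy the already-satisfied neighbor equations.

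The genuine gap is the step you yourself flag as the main obstacle: the interface contribution to $|\phi^0-\psi|_{s,\tGk}$. You assert that the cross integral over $\G\times\Gck$ "produces the final term $k^{1+2t-s}r^{-(1+t)}$ via weighted $L_2$ estimates along $\g$", but no proof is offered, and the claimed form cannot be right: once $\chi$ has been split off by the triangle inequality, nothing at scale $r$ enters that integral, so a factor $r^{-(1+t)}$ cannot arise there. (In the paper that term has a different origin entirely: the corrector is \emph{not} separated off before localizing, so its $H^t(\Gck)$-seminorm, of size $k^{1+t}r^{-(1+t)}|\psi_1|_{H^t(\Gck)}$, appears multiplied by the factor $k^{t-s}$ from the strip estimate.) Moreover, the natural symmetric splitting of the cross integral produces on the $\G$-side a Hardy-type term $\int_\G\dist(x,\g)^{-2s}|(\phi-\psi)(x)|^2\,dx$, which for $s>1/2$ is \emph{not} controlled by $\|\phi-\psi\|_{H^s(\G)}^2$ because $\psi$ has no vanishing trace on $\g$ --- exactly the difficulty you acknowledge but do not overcome. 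The paper avoids this by invoking the norm equivalence $\|\cdot\|_{\tilde H^s}\simeq\|\cdot\|_{H^s}$ on $H_0^s$ for $s\in(1/2,1]$ {\rm\cite[Lemma~3.1]{BespalovH_05_pB}} together with the localization result {\rm\cite[Lemma~3.5]{BespalovH_08_hpB}}, which yields
\[
   \|\phi^0-\psi\|_{\tilde H^s(\tGk)}^{2}
   \lesssim
   \|\phi-\psi\|_{s,\G}^{2}
   +
   \sum_{T\in\CT_k}
   \Bigl( k^{-2s} \|\psi\|_{L_2(T)}^2 + |\psi|_{s,T}^2 \Bigr),
\]
i.e.\ the interface is paid for only by weighted $L_2$ terms on the strip elements, where the Poincar\'e--Friedrichs argument applies. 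Without this (or an equivalent) gluing lemma your proof does not close; with it, your variant would in fact give the slightly sharper bound $\delta\simeq k^{1+t}r^{-(1+s)}+k^{t-s}$, confirming that your attribution of the third term to the cross integral is an artifact of reverse-engineering the stated $\delta$ rather than a derivation.
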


\begin{proof}
Let $\psi_1\in H_{X,r}$ be a minimizer of
\[
   \|\phi-\psi\|_{H^s(\G)}
   +
   \delta(s,t,k,r) |\psi|_{H^t(\Gck)}
\]
among the elements $\psi\in H_{X,r}$.
We will construct a function $\psi_2\in H_{X,r}$ such that
\begin{equation}\label{app1}
   \<\mu, \psi_1-\psi_2\>_\Gck = 0\quad\forall \mu\in M_k,
\end{equation}
that is $\psi:=\psi_1-\psi_2\in V_{X,r}$, and such that
\begin{equation}\label{app2}
   \|\phi^0-\psi\|_{\tilde H^s(\tGk)}
   \le
   C(s,t)
   \Bigl(
   \|\phi-\psi_1\|_{H^s(\G)}
   +
   \delta(s,t,k,r) |\psi_1|_{H^t(\Gck)}
   \Bigr).
\end{equation}
To this end recall the notation of $\CC$ for elements
that touch $\g$ only at a vertex and of $\CE$ for elements touching $\g$
with at least an edge.
Let us denote by $\CE_0\subset\CE$ the set of elements
which do not touch any element of $\CC$.

By assumption, the radial basis functions are
non-negative. We consider the normalized functions
$\phi_i^*:=\phi_i/\|\phi_i\|_{L_1(T)}$ for
$i=i(T)$ so that $\|\phi_i^*\|_{L_1(T)}=1$.
This normalization is well defined
since there is substantial overlap between $\supp(\phi_{i(T)})$ and $T$
by assumption (A4).
We make the ansatz
\[
   \psi_2 = \sum_{T\in\CT_k} c_T \phi^*_{i(T)}.
\]
For $T\in\CE_0$ we define
\[
   c_T=\int_T\psi_1
\]
so that $\<1, \psi_1-\psi_2\>_T = 0$ due to the chosen normalization.

If $\CC\not=\emptyset$ then there remain elements in $\CE$
which are associated with vertices of $\G$.
Let us pick one vertex
where this happens, i.e. there is an element $T_2\in\CC$ touching $\g$
at this vertex and there are two neighboring elements
$T_1, T_3\in\CE\setminus\CE_0$. For illustration see Figure~\ref{fig_corner}.

\begin{figure}[htb]
\begin{center}
\includegraphics[width=0.4\textwidth]{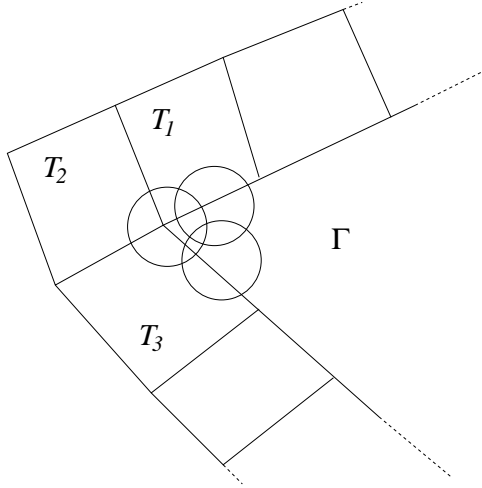}
\end{center}
\caption{Corner elements and supports of associated radial basis functions.}
\label{fig_corner}
\end{figure}

For this vertex we define
\[
   c_{T_1}=\int_{T_1}\psi_1 - \int_{T_1}\phi^*_{i(T_2)}\int_{T_2}\psi_1,\quad
   c_{T_2}=\int_{T_2}\psi_1,\quad
   c_{T_3}=\int_{T_3}\psi_1 - \int_{T_3}\phi^*_{i(T_2)}\int_{T_2}\psi_1.
\]
Repeating this construction for all elements $T\in\CC$ we obtain
$\psi_2\in H_{X,r}$ satisfying \eqref{app1}.

It remains to verify \eqref{app2}.
Since $s\in (1/2,1]$ there holds by \cite[Lemma 3.1]{BespalovH_05_pB}
\[
   \|\phi^0-\psi\|_{\tilde H^s(\tGk)}^{2}
   \simeq
   \|\phi^0-\psi\|_{H^s(\tGk)}^{2}
   \simeq
   \|\phi^0-\psi\|_{s,\tGk}^{2}
   \leq
   2\|\phi^0-\psi\|_{s,\G}^{2}
   +
   2\|\phi^0-\psi\|_{s,\G_k^c}^{2}.
\]
Here, we used the equivalence of the interpolation norm
$\|\cdot\|_{H^s(\tGk)}$ and the Slobodeckij norm
$\|\cdot\|_{s,\tGk}$ with constants independent of $k$ due to
$\meas(\G) \le \meas(\tGk) \le 2\meas(\G)$. However, they may depend
on $s$ and may be unbounded when $s\to 1/2$.
By noting that $\phi^0=\phi$ on $\G$ and $\phi^0=0$ on $\Gck$, and by
using \cite[Lemma 3.5]{BespalovH_08_hpB} we deduce
\[
   \|\phi^0-\psi\|_{\tilde H^s(\tGk)}^{2}
   \lesssim
   \|\phi-\psi\|_{s,\G}^{2}
   +
   \sum_{T\in\CT_k}
   \Bigl( k^{-2s} \|\psi\|_{L_2(T)}^2 + |\psi|_{s,T}^2 \Bigr).
\]
Using the triangle inequality with $\psi=\psi_1-\psi_2$
and the equivalence of norms again, this time on $\G$, we obtain
\begin{align*}
   \|\phi^0-\psi\|_{\tilde H^s(\tGk)}^2
   &\lesssim
   \|\phi-\psi_1\|_{H^s(\G)}^2
   +
   \|\psi_2\|_{H^s(\G)}^2
   +
   \sum_{T\in\CT_k}
   \Bigl( k^{-2s} \|\psi\|_{L_2(T)}^2 + |\psi|_{s,T}^2 \Bigr).
\end{align*}
Now, the support of $\psi_2$ is confined to a neighborhood of the
boundary of $\G$. Therefore, by a Poincar\'e-Friedrichs inequality
its norms can be replaced by the semi-norm, giving
\[
   \|\phi^0-\psi\|_{\tilde H^s(\tGk)}^2
   \lesssim
   \|\phi-\psi_1\|_{H^s(\G)}^2
   +
   |\psi_2|_{H^s(\G)}^2
   +
   \sum_{T\in\CT_k}
   \Bigl( k^{-2s} \|\psi\|_{L_2(T)}^2 + |\psi|_{s,T}^2 \Bigr).
\]
We finish the proof of the lemma by showing that
\begin{equation} \label{bound_psi2}
   |\psi_2|_{H^s(\G)} \lesssim k^{1+t} r^{-(1+s)} |\psi_1|_{H^t(\Gck)}
\end{equation}
and
\begin{equation} \label{bound_psi}
   \sum_{T\in\CT_k}
   \Bigl( k^{-2s} \|\psi\|_{L_2(T)}^2 + |\psi|_{s,T}^2 \Bigr)
   \lesssim
   k^{2(t-s)} \bigl( 1+ k^{1+t} r^{-(1+t)} \bigr)^{2}
   |\psi_1|_{H^t(\Gck)}^{2}.
\end{equation}
{\bf Proof of \eqref{bound_psi2}}.
By a coloring argument and the Cauchy-Schwarz inequality we start bounding
\begin{equation}\label{app4}
   |\psi_2|_{H^s(\G)}^2
   \lesssim
   \sum_{T\in\CT_k} c_T^2|\phi^*_{i(T)}|_{H^s(\supp(\phi^*_{i(T)}))}^2.
\end{equation}
Here we used that only a fixed number (independent of all relevant parameters)
of appearing radial basis functions overlap.
By the scaling property of the $H^s$-semi-norm (see, e.g., \cite{Heuer_01_ApS})
there holds
\[
   |\phi_{i(T)}|_{H^s(\supp(\phi_{i(T)}))}^2
   \simeq
   r^{2-2s} |\phi_{i(T)}|_{L_\infty(\supp(\phi_{i(T)}))}^2
   \simeq
   r^{2-2s} r^{-4}
\]
and by the assumption of substantial overlap (A4) one finds
\[
   \|\phi_{i(T)}\|_{L_1(T)}
   \simeq
   1.
\]
This proves
\begin{align} \label{app5}
   |\phi^*_{i(T)}|_{H^s(\supp(\phi^*_{i(T)}))}^2
   =
   \|\phi_{i(T)}\|_{L_1(T)}^{-2}
   |\phi_{i(T)}|_{H^s(\supp(\phi^*_{i(T)}))}^2
   \simeq
   r^{-2(1+s)}.
\end{align}
Now, for $T\in\CE_0$, again using scaling properties, transforming
to a reference element $\hat T$, denoting the transformed function
by adding the symbol ``$\,\hat{\ }\,$'',
and applying a Poincar\'e-Friedrichs inequality, we obtain
\begin{equation}\label{app6}
   c_T^2
   = (\int_T\psi_1)^2
   \simeq k^4 (\int_{\hat T} \hat\psi_1)^2
   \lesssim k^4 |\hat\psi_1|_{H^t(\hat T)}^2
   \simeq k^{2+2t} |\psi_1|_{H^t(T)}^2.
\end{equation}
For a corner element $T_2\in\CC$ with neighboring elements $T_1, T_3\in\CE$
we obtain
\begin{equation}\label{app7}
   c_{T_2}^2 \lesssim  k^{2+2t} |\psi_1|_{H^t(T_2)}^2
\end{equation}
as before and
\begin{equation}\label{app8}
   c_{T_1}^2
   \le
   2 \Bigl(\int_{T_1}\psi_1\Bigr)^2
   +
   2 \Bigl(\int_{T_1}\phi^*_{i(T_2)}\int_{T_2}\psi_1\Bigr)^2
   \lesssim
   k^{2+2t} |\psi_1|_{H^t(T_1)}^2
   +
   k^{2+2t} |\psi_1|_{H^t(T_2)}^2.
\end{equation}
In the last step we used that $\int_{T_1}\phi^*_{i(T_2)}\lesssim 1$
by the quasi-uniformity of the mesh, the normalization
$\int_{T_2}\phi^*_{i(T_2)}=1$ and the substantial overlap of
$\supp(\phi^*_{i(T_2)})$ with $T_2$.
Accordingly one bounds
\begin{equation}\label{app9}
   c_{T_3}^2
   \lesssim
   k^{2+2t} |\psi_1|_{H^t(T_3)}^2
   +
   k^{2+2t} |\psi_1|_{H^t(T_2)}^2
\end{equation}
and repeats this procedure for all edges where necessary.
Combining \eqref{app5}--\eqref{app9} and recalling \eqref{app4}
we obtain \eqref{bound_psi2}.

{\bf Proof of \eqref{bound_psi}.}
We use scaling arguments and a Poincar\'e-Friedrichs inequality as before.
By the integral-mean zero condition (or using that $\psi$ satisfies
a homogeneous boundary condition) and scaling properties we can bound
\[
   \|\psi\|_{L_2(T)}^2
   \lesssim k^{2t} |\psi|_{t,T}^2
   \lesssim k^{2} |\hat \psi|_{t,\hat T}^2
   \simeq k^{2} |\hat \psi|_{H^t(\hat T)}^2
   \lesssim k^{2t} |\psi|_{H^t(T)}^2
\]
and
\[
   |\psi|_{s,T}^2
   \lesssim
   k^{2(t-s)} |\psi|_{t,T}^2
   \lesssim
   k^{2-2s} |\hat \psi|_{t,\hat T}^2
   \simeq
   k^{2-2s} |\hat \psi|_{H^t(\hat T)}^2
   \lesssim
   k^{2(t-s)} |\psi|_{H^t(T)}^2
\]
so that
\begin{align*}
   \sum_{T\in\CT_k}
   \Bigl( k^{-2s} \|\psi\|_{L_2(T)}^2 + |\psi|_{s,T}^2 \Bigr)
   &\lesssim
   \sum_{T\in\CT_k}
   k^{2(t-s)} |\psi|_{H^t(T)}^2
   \lesssim
   k^{2(t-s)} |\psi|_{H^t(\Gck)}^2.
\end{align*}
Analogously to \eqref{bound_psi2} we can bound
\[
   |\psi_2|_{H^s(\Gck)} \lesssim
   k^{1+t} r^{-(1+t)} |\psi_1|_{H^t(\Gck)}.
\]
Therefore, the representation $\psi=\psi_1-\psi_2$ and the triangle
inequality, together with the previous estimate, yield
\begin{align*}
   \sum_{T\in\CT_k}
   \Bigl( k^{-2s} \|\psi\|_{L_2(T)}^2 + |\psi|_{s,T}^2 \Bigr)
   &\lesssim
   k^{2(t-s)} \Bigl( |\psi_1|_{H^t(\Gck)} +
|\psi_2|_{H^t(\Gck)} \Bigr)^{2}
   \\
   &\lesssim
   k^{2(t-s)} \Bigl( |\psi_1|_{H^t(\Gck)}
                     +
                     k^{1+t} r^{-(1+t)}
|\psi_1|_{H^t(\Gck)} \Bigr)^{2}.
\end{align*}
This finishes the proof of \eqref{bound_psi}.
\end{proof}

We can now state the main result of this section,
the quasi-optimal convergence of our scheme \eqref{BEM} in the unconstrained
space $H_{X,r}\times M_k$.

\begin{theorem} \label{thm_cea}
Let the assumptions {\rm(A1)--(A4)} be satisfied and let $t\in (1/2,1]$.
There exists a unique solution $\phi$ to \eqref{BEM}.
If $\phi\in\tilde H^t(\G)$ then there holds for $\epsilon>0$
the error estimate
\[
   \|\phi^0-\phi_N\|_{\tHp(\tGk)}
   \le
   C(\epsilon)
   \inf_{\psi\in H_{X,r}}
   \Bigl(
      \|\phi-\psi\|_{H^{1/2+\epsilon}(\G)}
      +
      \delta(\epsilon,t,k,r)
      |\psi|_{H^t(\Gck)}
   \Bigr)
   +
   \inf_{\mu\in M_k} \|\lam-\mu\|_{\tHm_\g(\Gck)}
\]
where $C(\epsilon)>0$ depends only on $\epsilon$ and with
\[
   \delta(\epsilon,t,k,r)
   :=
   k^{1+t} r^{-3/2-\epsilon}
   +
   k^{t-1/2-\epsilon} \bigl( 1+ k^{1+t} r^{-(1+t)} \bigr).
\]
\end{theorem}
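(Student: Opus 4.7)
The statement follows by combining the two preceding results rather than by a new computation, so the plan is essentially to assemble them cleanly, handling the shift from the natural parameter $s$ of Lemma~\ref{la_approx} to the energy norm index $1/2$ by paying a factor that blows up as $s \downarrow 1/2$.

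First, existence and uniqueness of $(\phi_N,\lam_k)$, and the Strang-type bound
\[
   \|\phi^0-\phi_N\|_{\tHp(\tGk)}
   \lesssim
   \inf_{\psi\in V_{X,r}} \|\phi^0-\psi\|_{\tHp(\tGk)}
   +
   \inf_{\mu\in M_k} \|\lam-\mu\|_{\tHm_\g(\Gck)},
\]
are precisely Theorem~\ref{thm_cea1}, which only needs (A1)--(A3). So nothing new is required for those; I would just cite it.

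Next, to bound the constrained best-approximation term by an unconstrained one, I would invoke Lemma~\ref{la_approx} with $s=1/2+\epsilon$ and the given $t\in(1/2,1]$, which is admissible provided $\epsilon<t-1/2$ (implicit in the statement). Since $s>1/2$, we have the continuous embedding $\tilde H^s(\tGk)\hookrightarrow\tHp(\tGk)$, so
\[
   \inf_{\psi\in V_{X,r}} \|\phi^0-\psi\|_{\tHp(\tGk)}
   \lesssim
   \inf_{\psi\in V_{X,r}} \|\phi^0-\psi\|_{\tilde H^s(\tGk)}
   \le
   C(s,t) \inf_{\psi\in H_{X,r}} \Bigl( \|\phi-\psi\|_{H^s(\G)} + \delta(s,t,k,r) |\psi|_{H^t(\Gck)} \Bigr),
\]
with $\delta(s,t,k,r)=k^{1+t}r^{-(1+s)}+k^{t-s}(1+k^{1+t}r^{-(1+t)})$. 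Substituting $s=1/2+\epsilon$ recovers exactly the $\delta(\epsilon,t,k,r)$ stated in the theorem, and one renames $C(1/2+\epsilon,t)$ as $C(\epsilon)$ (absorbing the $t$-dependence into the hidden constant, as the theorem's $C(\epsilon)$ is only claimed to depend on $\epsilon$). Combining with the Strang-type bound yields the claim.

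There is really no main obstacle here, since Theorem~\ref{thm_cea1} and Lemma~\ref{la_approx} do all the substantive work; the only subtle point to articulate is why one cannot directly take $s=1/2$ in the lemma. The equivalence $\|\cdot\|_{\tHp(\tGk)}\simeq\|\cdot\|_{H^{1/2}(\tGk)}\simeq\|\cdot\|_{1/2,\tGk}$ used in the lemma's proof (via Bespalov--Heuer) fails at $s=1/2$, forcing $\epsilon>0$ and producing the $\epsilon$-dependent constant; this is the reason for the form of the stated estimate and should be mentioned explicitly. Beyond that, the proof is a direct concatenation of the two results.
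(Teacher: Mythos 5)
Your proposal is correct and follows exactly the paper's own (one-line) proof: combine Theorem~\ref{thm_cea1} with Lemma~\ref{la_approx} at $s=1/2+\epsilon$. Your added remarks — the implicit restriction $\epsilon<t-1/2$ needed for the lemma's hypothesis $s<t$, and the reason the norm equivalence degenerates as $s\downarrow 1/2$ — are accurate and merely make explicit what the paper leaves tacit.
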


\begin{proof}
The statement is a combination of Theorem~\ref{thm_cea1} and
Lemma~\ref{la_approx}, setting $s=1/2+\epsilon$ in the latter one.
\end{proof}

In the next section an error estimate for the best approximation
by scaled radial basis functions will be derived.

\mysection{Approximation by scaled radial basis functions}{sec_rbf}

Standard approximation theory of radial basis functions centers around
the native space defined by
\[
   \CN_\Phi
   :=
   \Bigl\{v\in L_2(\R^2);\;
     \int_{\fR^2} \frac {|\hat v(\xi)|^2}{\hat\Phi(\xi)}\,d\xi < \infty
   \Bigr\}
   \quad\mbox{with norm}\quad
   \|v\|_{\CN_\Phi}
   :=
   \|\hat\Phi^{-1/2}\hat v\|_{L_2(\fR^2)}
\]
which, for $\Phi$ satisfying \eqref{hatPhi},
is identical to the Sobolev space $H^\tau(\R^2)$ with equivalent norms
when defining
\[
   \|v\|_{H_F^s(\fR^2)}
   := \|(1+|\cdot|^2)^{s/2} \ \hat v(\cdot)\|_{L_2(\fR^2)} \qquad (s\in\R).
\]
It is well known that the interpolation operator $I_X$ defined by
\[
   I_X v \in H_X=\spn\{\Phi(\cdot-x_i);\; i=1,\ldots,N\};\qquad
   I_Xv(x_i) = v(x_i),\quad i=1,\ldots,N
\]
for $v\in\CN_\Phi$, satisfies
\[
   \|v-I_Xv\|_{\CN_\Phi} = \min_{\psi\in H_X} \|v-\psi\|_{\CN_\Phi},
\]
see \cite{Duchon_78_SEI}.

Using the scaled radial basis function $\Phi_r$ the native space
$\CN_{\Phi_r}$ is still identical to $H^\tau(\R^2)$ but its norm is
uniformly equivalent to the norm
\[
   \|v\|_{H_{F,r}^s(\fR^2)}
   :=
   \|(1+r^2|\cdot|^2)^{s/2} \ \hat v(\cdot)\|_{L_2(\fR^2)}
   \quad (r>0)
   \qquad\mbox{for}\quad s=\tau,
\]
cf. \eqref{hatPhir}.

For the analysis, we also need the scaled versions of the
norms defined in Section~\ref{sec_model}.
{For $s>0$, $s=m+\sigma$ with integer $m\ge 0$ and
{$\sigma\in(0,1)$},}
the scaled Sobolev-Slobodeckij norm is defined by
\[
   \|v\|_{s,r,\Sigma}
   :=
   \Bigl(
   \|v\|_{m,r,\Sigma}^2
   + r^{2s}|v|_{s,\Sigma}^2\Bigr)^{1/2}
   \qquad (r>0)
\]
where $\Sigma$ is a domain in $\R^2$
and
\[
   \|v\|_{m,r,\Sigma}^2
   :=
   \sum_{|\alpha|\le m} r^{2|\alpha|}
   \|D^\alpha v\|_{L_2(\Sigma)}^2
\]
with multi-index $\alpha=(\alpha_1,\alpha_2)$.
We also define the scaled interpolation spaces
\[
   H_r^s(\Sigma) = [H_r^m(\Sigma), H_r^{m+1}(\Sigma)]_{\sigma},
   \quad (s=m+\sigma \mbox{ as before})
\]
{with norm denoted by $\|\cdot\|_{H_r^s(\Sigma)}$.}
By the equivalence of the semi-norms $|v|_{s,\fR^2}$ and
$\||\cdot|^s\hat v(\cdot)\|_{L_2(\fR^2)}$
(cf.~\cite[Lemma~3.15]{McLean_00_SES}) and by repeating the
arguments of Theorem~B.7 in \cite{McLean_00_SES} we deduce
that
\begin{equation}\label{scaled s}
   \|v\|_{s,r,\fR^2}
   \simeq
   \|v\|_{H_{F,r}^s(\fR^2)}
   \simeq
   \|v\|_{H_{r}^s(\fR^2)}
   \quad \forall v\in {H^s(\R^2)},
   \quad r>0,
\end{equation}
where the constants are independent of $r$.
In particular, when $s=\tau$ there holds
\begin{equation}\label{scaled}
   \|v\|_{\tau,r,\fR^2} \simeq \|v\|_{H_{r}^\tau(\fR^2)}
                        \simeq \|v\|_{\CN_{\Phi_r}}
   \quad \forall v\in {H^\tau(\R^2)},
   \quad r>0.
\end{equation}
Furthermore, we need the following result.

\begin{lemma} \label{la_norms}
For any $s>0$ there exists a bounded extension operator
\begin{equation}\label{Es}
   E:\; H_r^s(\G)\to H_r^s(\R^2).
\end{equation}
As a consequence,there holds
\begin{equation}\label{equiv}
   \|v\|_{s,r,\G}
   \lesssim
   \|v\|_{H_r^s(\G)}
   \qquad\forall v\in {H^s(\G)}.
\end{equation}
In both cases the boundedness is uniform for $r>0$ bounded from above.
\end{lemma}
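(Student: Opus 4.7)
The plan is to build a single, $r$-independent extension operator $E$ establishing \eqref{Es}, and then to recover \eqref{equiv} by composing $E$ with the Slobodeckij/interpolation equivalence \eqref{scaled s} already available on $\R^2$. Since $\G$ is a polygonal Lipschitz domain, Stein's extension theorem provides a fixed linear operator $E$, independent of Sobolev order, such that $(Ev)|_\G = v$ and $\|Ev\|_{H^k(\R^2)} \lesssim \|v\|_{H^k(\G)}$ for every integer $k\ge 0$; the only remaining task is to accommodate the $r$-weights. (A reflection-based extension adapted to polygonal $\G$ would serve equally well.)

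I begin with integer $m$. For $|\alpha|\le m$, Stein's bound yields $\|D^\alpha Ev\|_{L_2(\R^2)}^2 \lesssim \sum_{|\beta|\le|\alpha|} \|D^\beta v\|_{L_2(\G)}^2$. For $|\beta|\le|\alpha|$ and $r\in(0,r_0]$, the elementary inequality $r^{2|\alpha|}\le r_0^{2(|\alpha|-|\beta|)}\, r^{2|\beta|}$ together with multiplication by $r^{2|\alpha|}$ and summation over $|\alpha|\le m$ gives $\|Ev\|_{m,r,\R^2} \lesssim \|v\|_{m,r,\G}$ with constant depending only on $m$ and $r_0$. Since the scaled Slobodeckij norm coincides with $\|\cdot\|_{H_r^m(\cdot)}$ at integer orders, this settles the integer scale of \eqref{Es}. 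Real-method interpolation between the orders $m$ and $m+1$ then delivers \eqref{Es} for $s=m+\sigma$, $\sigma\in(0,1)$, because the operator norm on the interpolated couple is controlled by the geometric mean of the endpoint operator norms; uniformity in $r$ therefore propagates from the two endpoints.

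With \eqref{Es} in hand, \eqref{equiv} follows by chaining $\|v\|_{s,r,\G}\le\|Ev\|_{s,r,\R^2}$ (restrict the $L_2$- and Slobodeckij integrals from $\R^2$ to $\G$, using $Ev|_\G = v$), $\|Ev\|_{s,r,\R^2}\simeq\|Ev\|_{H_r^s(\R^2)}$ (by \eqref{scaled s}, with $r$-independent constants), and $\|Ev\|_{H_r^s(\R^2)}\lesssim\|v\|_{H_r^s(\G)}$ (by \eqref{Es}). The principal point to watch is uniformity in $r$ through the interpolation step; this is made harmless by the real $K$-method, which bounds the interpolated operator norm by a pure product of the endpoint norms, so the uniformity inherited from the integer case survives to all $s>0$.
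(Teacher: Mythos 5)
Your proposal is correct and follows essentially the same route as the paper: Stein's extension at integer orders, absorption of the $r$-weights using $r\le r_0$, real-method interpolation to reach fractional $s$ with $r$-uniform constants, and then the chain $\|v\|_{s,r,\G}\le\|Ev\|_{s,r,\R^2}\simeq\|Ev\|_{H_r^s(\R^2)}\lesssim\|v\|_{H_r^s(\G)}$ via \eqref{scaled s} and \eqref{Es}. No gaps.
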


\begin{proof}
%
By Stein (see \cite[Section~3, Chapter~VI]{Stein_70_SID})
there is a bounded extension operator $E : H^m(\G) \to
H^m(\R^2)$ defined for all non-negative intergers $m$.
It follows that $E : H_r^m(\G) \to H_r^m(\R^2)$. Indeed, 
{for any integer $m$ there hold}
\begin{align*}
   \|Ev\|_{H_r^m(\fR^2)}^2
   &=
   \sum_{|\alpha|\le m}
   r^{2|\alpha|} \|D^\alpha(Ev)\|_{L_2(\fR^2)}^2
   \le
   \sum_{|\alpha|\le m}
   r^{2|\alpha|} \|Ev\|_{H^{|\alpha|}(\fR^2)}^2 \\
   &\lesssim
   \sum_{|\alpha|\le m}
   r^{2|\alpha|} \|v\|_{H^{|\alpha|}(\G)}^2
   =
   \sum_{|\alpha|\le m}
   r^{2(|\alpha|-|\beta|)}
   \sum_{|\beta|\le |\alpha|}
   r^{2|\beta|} \|D^\beta v\|_{L_2(\G)}^2 \\
   &\lesssim
   \sum_{|\alpha|\le m}
   \sum_{|\beta|\le |\alpha|}
   r^{2|\beta|} \|D^\beta v\|_{L_2(\G)}^2
   \lesssim
   \|v\|_{H_r^m(\G)}^2,
\end{align*}
where in the penultimate step we used the fact that $r$ is
bounded above. 
By interpolation we obtain the boundedness of
$E : H_r^s(\G) \to H_r^s(\R^2)$ for all $s>0$,
i.e.,~\eqref{Es}.

To prove \eqref{equiv} we note that $Ev=v$ on $\G$ to obtain
for any $v\in H^s(\G)$ with $s=m+\sigma$
\begin{align*}
   \|v\|_{s,r,\G}^2
   &=
   \sum_{|\alpha|\le m}
   r^{2|\alpha|} \|D^\alpha v\|_{L_2(\G)}^2
   +
   r^{2s} |v|_{s,\G}^2
   =
   \sum_{|\alpha|\le m}
   r^{2|\alpha|} \|D^\alpha(Ev)\|_{L_2(\G)}^2
   +
   r^{2s} |Ev|_{s,\G}^2 \\
   &\le
   \sum_{|\alpha|\le m}
   r^{2|\alpha|} \|D^\alpha(Ev)\|_{L_2(\fR^2)}^2
   +
   r^{2s} |Ev|_{s,\fR^2}^2
   =
   \|Ev\|_{s,r,\fR^2}^2.
\end{align*}
By using~\eqref{scaled s} and~\eqref{Es} we
deduce~\eqref{equiv}.
\end{proof}

In the following we recall and adapt techniques from
\cite{NarcowichWW_05_SBF} to bound the approximation error
appearing in Theorem~\ref{thm_cea}.
We will need the following result; see 
\cite[Theorem 2.12]{NarcowichWW_05_SBF},
\cite[Corollary~4.1]{ArcangeliST_07_ESB}.

\begin{prop} \label{prop2}
Let $k$ be a positive integer and $\sigma\in(0,1]$.
Then there exists $h_0>0$ such that for any $X$ with $h_{X,\G}\le h_0$
and for any $l=0,\ldots,k$
there holds
\[
   |v|_{l,\G}
   \le
   C(k,\sigma)\, h_{X,\G}^{k+\sigma-l} |v|_{k+\sigma,\G}
   \quad
   \forall v\in H^{k+\sigma}(\G)\ \mbox{with}\ v|_X=0.
\]
\end{prop}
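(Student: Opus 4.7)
The result is a Bramble--Hilbert / sampling inequality for functions that vanish on a sufficiently dense scattered set, and I would follow the polynomial reproduction strategy of Narcowich--Ward--Wendland. The key geometric input is that $\G$, being a polygonal Lipschitz domain, satisfies an interior cone condition. Using this, the plan is to construct, for any $x\in\G$ and $h_{X,\G}$ small enough, a family of coefficients $\{u_i(x)\}$ with the properties: (i) $u_i(x)=0$ unless $|x-x_i|\le C_1 h_{X,\G}$; (ii) $\sum_i |u_i(x)|\lesssim 1$; and (iii) $\sum_i u_i(x)\,p(x_i)=p(x)$ for every polynomial $p$ of degree less than $k$. Such reproducing functionals are produced by a moving least-squares argument, exploiting that each local patch of diameter $\sim h_{X,\G}$ contains points of $X$ that are unisolvent for polynomials of degree less than $k$.

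With this reproduction at hand I would first treat the $l=0$ case. Set $Q_Xv(x):=\sum_i u_i(x)\,v(x_i)$, which vanishes identically under the hypothesis $v|_X=0$. For any polynomial $p$ of degree less than $k$, property (iii) gives
\[
  v(x) \;=\; v(x)-Q_Xv(x) \;=\; (v-p)(x) - Q_X(v-p)(x).
\]
Choosing $p$ to be the averaged Taylor polynomial of $v$ of degree $k-1$ on the patch $B(x,C_1 h_{X,\G})\cap\G$ and applying the fractional Bramble--Hilbert lemma on that patch, together with (ii), yields the local bound
\[
  \bigl( \textstyle\int_{B(x,C_1 h_{X,\G})\cap\G} |v|^2 \bigr)^{1/2}
  \;\lesssim\;
  h_{X,\G}^{k+\sigma}\, |v|_{k+\sigma,B(x,C_1 h_{X,\G})\cap\G}.
\]
Integrating the squared pointwise version over $\G$ and using the finite overlap of the covering then gives $|v|_{0,\G}\lesssim h_{X,\G}^{k+\sigma}|v|_{k+\sigma,\G}$, the $l=0$ case.

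For $l\in\{1,\dots,k\}$, I would argue by convex interpolation between this $l=0$ bound and the trivial endpoint $|v|_{k+\sigma,\G}\le |v|_{k+\sigma,\G}$, using the $K$-method characterisation of the intermediate Sobolev seminorms on $\G$; the intermediate power $h_{X,\G}^{k+\sigma-l}$ then comes out of the interpolation identity. Alternatively, one may apply a polynomial quasi-interpolant $P_X v$ that agrees with $v$ on the local node set (hence is pointwise small by the $l=0$ argument) and bound $|P_X v|_{l,T}$ on each patch $T$ by inverse estimates for polynomials combined with the patch-wise Bramble--Hilbert bound.

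The main technical obstacle is the uniform construction of the polynomial-reproducing functionals for patches meeting $\partial\G$, since there one has to ensure unisolvency and uniform bounds using only points of $X$ inside $\G$; this is precisely where the interior cone condition and the smallness assumption $h_{X,\G}\le h_0$ enter. The full geometric verification is carried out in the cited works \rcite{NarcowichWW_05_SBF} and \rcite{ArcangeliST_07_ESB}; once the reproduction is in place, what remains is a routine scaling argument on patches of size $h_{X,\G}$.
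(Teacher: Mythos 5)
The paper does not prove Proposition~\ref{prop2} at all: it is imported verbatim from the literature, with pointers to \cite[Theorem 2.12]{NarcowichWW_05_SBF} and \cite[Corollary 4.1]{ArcangeliST_07_ESB}, so there is no in-paper argument to compare against. Your sketch is a faithful reconstruction of the strategy used in those references: local polynomial reproduction on patches of diameter $\sim h_{X,\G}$ (built via norming sets or moving least squares, with the interior cone condition guaranteeing uniform unisolvency near $\partial\G$), a fractional Bramble--Hilbert estimate on each patch, and summation over a finite-overlap cover. That part is sound, including the bookkeeping of powers for $l=0$ (the $L_\infty$ bound costs $h^{k+\sigma-d/2}$ and the measure of the patch restores $h^{d/2}$ with $d=2$).

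The one step you should not wave through is the passage from $l=0$ to $1\le l\le k$ by ``convex interpolation.'' The $K$-method interpolation inequality gives $\|v\|_{H^l(\G)}\lesssim\|v\|_{L_2(\G)}^{1-\theta}\|v\|_{H^{k+\sigma}(\G)}^{\theta}$ with $\theta=l/(k+\sigma)$, i.e.\ the \emph{full} norm of order $k+\sigma$ on the right, not the seminorm $|v|_{k+\sigma,\G}$; replacing the former by the latter is exactly the content of the proposition for the intermediate orders, so the argument as stated is circular. It can be repaired by an absorption argument: setting $N:=\|v\|_{H^{k+\sigma}(\G)}$ and feeding the interpolation bounds for all $l$ back into $N^2=\|v\|_{L_2}^2+\sum_j|v|_j^2+|v|_{k+\sigma}^2$ yields $N^2\le C h_{X,\G}^{2\sigma}N^2+|v|_{k+\sigma,\G}^2$, which closes for $h_{X,\G}\le h_0$ small --- note this is precisely where the hypothesis $h_{X,\G}\le h_0$ is used a second time. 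Alternatively, your second route (estimating $|v|_{l,T}$ patchwise via the quasi-interpolant and inverse estimates for polynomials) avoids the issue entirely and is the one actually carried out in \cite{NarcowichWW_05_SBF}; I would make that the primary argument rather than the fallback.
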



In the following, $\lfloor\tau\rfloor$ denotes the largest integer
smaller than or equal to $\tau$.

\begin{lemma} \label{la_IX}
Suppose that assumptions {\rm(A1)} and \eqref{hatPhi} are satisfied.
Then for $s\in[0,{\lfloor\tau\rfloor}]$ there holds
\[
   \|v-I_Xv\|_{H_r^s(\G)}
   \le
   C(s,\tau)
   \left(\frac {h_{X,\G}}{r} \right)^{\tau-s} \|v\|_{H_r^\tau(\G)}
   \quad\forall r\in (0,r_0].
\]
\end{lemma}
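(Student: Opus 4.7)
The plan is to combine the native-space minimizing property of the radial basis function interpolant $I_X$ with the zeros/sampling inequality of Proposition~\ref{prop2}, carefully tracking the scaling parameter $r$ throughout.

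First I would extend $v \in H^\tau(\G)$ to $Ev \in H^\tau(\R^2) = \CN_{\Phi_r}$ using the operator $E$ of Lemma~\ref{la_norms}. Since $I_X v$ depends only on values at the nodes $X\subset \bar\G$, we have $I_X v = I_X(Ev)$. The orthogonality of the native-space interpolant gives $\|Ev - I_X v\|_{\CN_{\Phi_r}} \le \|Ev\|_{\CN_{\Phi_r}}$, and combining with the uniform equivalences \eqref{scaled s}--\eqref{scaled} and the bound \eqref{equiv} yields
\[
   \|Ev - I_X v\|_{\tau,r,\fR^2}
   \;\simeq\; \|Ev - I_X v\|_{\CN_{\Phi_r}}
   \;\le\; \|Ev\|_{\CN_{\Phi_r}}
   \;\lesssim\; \|v\|_{H_r^\tau(\G)},
\]
with constants uniform in $r \in (0,r_0]$. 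Restricting the scaled Slobodeckij norm to $\G$ and writing $w:=v-I_X v$, this supplies in particular the top-order bound $r^{\tau}|w|_{\tau,\G} \lesssim \|v\|_{H_r^\tau(\G)}$ along with $r^{|\alpha|}\|D^\alpha w\|_{L_2(\G)} \lesssim \|v\|_{H_r^\tau(\G)}$ for $|\alpha|\le \lfloor\tau\rfloor$.

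Since $w|_X = 0$, Proposition~\ref{prop2} applied with $k=\lfloor\tau\rfloor$, $\sigma = \tau - k \in (0,1]$ (or with $k=\tau-1$, $\sigma=1$ when $\tau$ is an integer) yields
\[
   |w|_{l,\G} \;\lesssim\; h_{X,\G}^{\tau - l}\, |w|_{\tau,\G}, \qquad l=0,1,\ldots,\lfloor\tau\rfloor.
\]
Multiplying by $r^l$ and substituting the Step~1 estimate on $r^\tau |w|_{\tau,\G}$ gives
\[
   r^l |w|_{l,\G} \;\lesssim\; (h_{X,\G}/r)^{\tau-l}\, \|v\|_{H_r^\tau(\G)}.
\]
For integer $s \in \{0,\dots,\lfloor\tau\rfloor\}$, the equivalence $\|w\|_{H_r^s(\G)}^2 \simeq \sum_{l\le s} r^{2l}|w|_{l,\G}^2$ combined with $h_{X,\G}/r \le 1$ (guaranteed for $h_0$ small enough relative to $r$) produces the desired estimate. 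The edge case $s = \lfloor\tau\rfloor = \tau$ (integer $\tau$) is handled directly from Step~1, without invoking Proposition~\ref{prop2}.

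For non-integer $s\in(0,\lfloor\tau\rfloor)$ I would close the argument by interpolating between the two adjacent integer levels $\lfloor s\rfloor$ and $\lceil s\rceil = \lfloor s\rfloor+1$ in the scaled Sobolev scale, via the definition $H_r^s(\G) = [H_r^{\lfloor s\rfloor}(\G), H_r^{\lceil s\rceil}(\G)]_{s-\lfloor s\rfloor}$. The endpoint bounds carry factors $(h_{X,\G}/r)^{\tau-\lfloor s\rfloor}$ and $(h_{X,\G}/r)^{\tau-\lceil s\rceil}$ respectively, and interpolation reproduces exactly $(h_{X,\G}/r)^{\tau-s}\,\|v\|_{H_r^\tau(\G)}$ with an $r$-independent constant. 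The main obstacle I anticipate is precisely this $r$-uniformity: Proposition~\ref{prop2} is blind to the scaling, so every power of $r$ in the target must emerge from the native-space side, and one must lean on the $r$-uniform identifications between native-space, scaled Fourier, and scaled Slobodeckij norms established earlier in Section~\ref{sec_rbf} (and on the $r$-uniform boundedness of the Stein extension operator in Lemma~\ref{la_norms}) to transfer the estimates cleanly.
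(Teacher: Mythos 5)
Your proposal is correct and follows essentially the same route as the paper's proof: extend via the $r$-uniformly bounded Stein operator of Lemma~\ref{la_norms}, use the native-space minimization property of $I_X$ together with \eqref{scaled s}--\eqref{scaled} and \eqref{equiv} to get the $r$-uniform stability bound at order $\tau$, apply Proposition~\ref{prop2} to $v-I_Xv$ for integer orders, and interpolate between adjacent integer levels of $H_r^s(\G)$ for non-integer $s$. Your explicit handling of the integer-$\tau$ edge case and of the requirement $h_{X,\G}/r\le 1$ when summing the integer-order seminorms is a slightly more careful bookkeeping of points the paper leaves implicit, but the argument is the same.
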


\begin{proof}
Let
\[
   E:\; H_r^\tau(\G)\ \to\ H_r^\tau(\R^2)
\]
be a uniformly (in $r$) bounded extension operator,
cf. Lemma~\ref{la_norms}, and let $v\in H_r^\tau(\G)$.

Using that $Ev=v$ on $\G$ and thus $I_XEv=I_Xv=EI_Xv$ on $\G$
(since $X\subset \bar\G$) one finds
that $E v - I_X E v$ is an extension of $v - I_Xv$. Therefore,
the property that $I_X$ is an orthogonal projection in
$\CN_{\Phi_r}$ yields, noting \eqref{scaled},
\begin{align} \label{stab_IX}
   \|v - I_Xv\|_{H_r^\tau(\G)}
   &\lesssim
   \|E_k v - I_X E_k v\|_{H_r^\tau(\fR^2)}
   \simeq
   \|E_k v - I_X E_k v\|_{\CN_{\Phi_r}}
   \nonumber \\
   &\le
   \|E_k v\|_{\CN_{\Phi_r}}
   \simeq
   \|E_k v\|_{H_{r}^\tau(\R^2)}
   \lesssim
   \|v\|_{H_r^\tau(\G)}.
\end{align}
For integer $\ell\le\lfloor\tau\rfloor$,
Proposition~\ref{prop2}, estimate \eqref{equiv}
and stability \eqref{stab_IX} yield
\begin{align*}
   r^\ell |v-I_Xv|_{\ell,\G}
   &\lesssim
   r^\ell h_{X,\G}^{\tau-\ell} \|v-I_Xv\|_{\tau,\G}
   \lesssim
   \left(\frac {h_{X,\G}}{r} \right)^{\tau-\ell} \|v-I_Xv\|_{\tau,r,\G}
   \\
   &\lesssim
   \left(\frac {h_{X,\G}}{r} \right)^{\tau-\ell} \|v-I_Xv\|_{H_r^\tau(\G)}
   \lesssim
   \left(\frac {h_{X,\G}}{r} \right)^{\tau-\ell} \|v\|_{H_r^\tau(\G)}.
\end{align*}
This proves the assertion for integer $s$.
For non-integer $s<\tau$ we interpolate between
$H_r^{\lfloor s\rfloor}(\G)$ and $H_r^{\lfloor
s\rfloor+1}(\G)$,
noting that if $s=\lfloor s\rfloor+\sigma$ with $\sigma\in(0,1)$
be such that $s\le\lfloor\tau\rfloor$ then
$\lfloor s\rfloor+1\le\lfloor\tau\rfloor$.
\end{proof}

\begin{theorem} \label{thm_approx}
Let assumptions {\rm(A1)} and \eqref{hatPhi} be satisfied.
For $t\in(0,{\lfloor\tau\rfloor}]$ with $t+1/2$ being non-integer,
let $r\simeq h_{X,\G}^{1-t/\tau}$.
Then for any $v\in\tilde H^t(\G)$
there exists $\psi\in H_{X,r}$ such that for $0\le s\le t$ there holds
\[
   \|v-\psi\|_{H^s(\G)}
   \le
   C h_{X,\G}^{(t-s)(1-t/\tau)} \|v\|_{\tilde H^t(\G)}.
\]
Here, the constant $C$ is independent of $v$ and $h_{X,\G}$ but may
depend on $s$, $t$ and $\tau$.
\end{theorem}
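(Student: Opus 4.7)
The plan is to construct $\psi$ as the RBF-interpolant (in $H_{X,r}$) of a suitably smoothed, band-limited extension of $v$ to $\R^2$. Lemma~\ref{la_IX} delivers the interpolation error in the scaled norm, but only for functions of $H_r^\tau$-regularity, whereas we only have $v\in\tilde H^t(\G)$ with $t\le\tau$. The standard device, following Narcowich--Ward--Wendland, is to trade an $r$-dependent approximation error for the $\tau-t$ missing derivatives.

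First, using the assumption that $t+1/2$ is non-integer, extend $v$ by zero to $v^0\in H^t(\R^2)$ with $\|v^0\|_{H^t(\R^2)}\lesssim \|v\|_{\tilde H^t(\G)}$. Then apply a smooth Fourier cut-off at frequency $1/r$: pick $\rho\in C_c^\infty(\R^2)$ with $\rho=1$ on $B(0,1)$, $\operatorname{supp}\rho\subset B(0,2)$, and define $v_r$ by $\widehat{v_r}(\xi)=\rho(r\xi)\,\widehat{v^0}(\xi)$. A direct Plancherel computation yields the two key estimates
\[
  \|v^0-v_r\|_{H^s(\R^2)} \lesssim r^{t-s}\,\|v\|_{\tilde H^t(\G)}\quad(s\le t),\qquad
  \|v_r\|_{H^\tau_{F,r}(\R^2)} \lesssim \|v\|_{\tilde H^t(\G)}.
\]
The first is obtained by bounding $(1+|\xi|^2)^{s-t}$ by $r^{2(t-s)}$ on the support of $1-\rho(r\,\cdot)$; the second follows since $(1+r^2|\xi|^2)^\tau$ is bounded on the support of $\rho(r\,\cdot)$. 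By \eqref{scaled s} this controls $\|v_r\|_{H_r^\tau(\R^2)}$, and hence $\|v_r\|_{H_r^\tau(\G)}\lesssim \|v\|_{\tilde H^t(\G)}$ by restriction.

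Next, define $\psi:=I_X v_r\in H_{X,r}$ (point values make sense since $H^\tau(\R^2)\hookrightarrow C(\R^2)$ for $\tau>1$). Split
\[
  \|v-\psi\|_{H^s(\G)} \le \|v^0-v_r\|_{H^s(\R^2)} + \|v_r-I_X v_r\|_{H^s(\G)}.
\]
For the second term, use the elementary scaling $\|w\|_{H^s(\G)}\lesssim r^{-s}\|w\|_{H_r^s(\G)}$ (valid for $r\le 1$, readable from the definitions of the scaled norms in Section~\ref{sec_rbf}) combined with Lemma~\ref{la_IX}:
\[
  \|v_r-I_X v_r\|_{H^s(\G)} \lesssim r^{-s}\,(h_{X,\G}/r)^{\tau-s}\,\|v_r\|_{H_r^\tau(\G)}
     \lesssim h_{X,\G}^{\tau-s}\,r^{-\tau}\,\|v\|_{\tilde H^t(\G)}.
\]
Together with the first term this yields
\[
  \|v-\psi\|_{H^s(\G)} \lesssim \bigl(r^{t-s} + h_{X,\G}^{\tau-s}\,r^{-\tau}\bigr)\,\|v\|_{\tilde H^t(\G)}.
\]

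Finally, inserting $r\simeq h_{X,\G}^{1-t/\tau}$ gives $r^{t-s}=h_{X,\G}^{(t-s)(1-t/\tau)}$ and $h_{X,\G}^{\tau-s}r^{-\tau}=h_{X,\G}^{t-s}$; since $(t-s)(1-t/\tau)\le t-s$ and $h_{X,\G}\le 1$, the first term dominates, producing the stated estimate. The main obstacles are technical: verifying the Plancherel estimates on the band-limited $v_r$ carefully (using the cut-off property to introduce the needed powers of $r$) and tracking the scaling between $H^s(\G)$ and $H_r^s(\G)$ so that the exponents of $r$ balance cleanly after substituting $r\simeq h_{X,\G}^{1-t/\tau}$. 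The non-integrality of $t+1/2$ is only invoked once, to guarantee the zero-extension bound $\|v^0\|_{H^t(\R^2)}\lesssim \|v\|_{\tilde H^t(\G)}$.
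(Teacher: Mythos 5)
Your proposal is correct and follows essentially the same route as the paper: approximate the zero-extension $v^0$ by a band-limited function at frequency scale $1/r$ (the paper cites \cite[Proposition 3.6]{NarcowichW_04_SDI} where you use an explicit Fourier cut-off), apply the interpolant $I_X$ and Lemma~\ref{la_IX} in the scaled norms, and balance exponents with $r\simeq h_{X,\G}^{1-t/\tau}$. The only cosmetic difference is that you estimate $\|v_r-I_Xv_r\|_{H^s(\G)}$ directly for each $s$ via the scaling $\|w\|_{H^s(\G)}\lesssim r^{-s}\|w\|_{H_r^s(\G)}$, whereas the paper derives the two endpoint bounds at $s=0$ and $s=t$ and then interpolates.
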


\begin{proof}
We follow the proof of \cite[Theorem 3.8]{NarcowichWW_05_SBF}.
Let $v\in\tilde H^t(\G)$ be given.
According to \cite[Proposition 3.6]{NarcowichW_04_SDI}
for any $\sigma>0$, there exists a band-limited function
$g_\sigma\in\CB_\sigma:=\{v\in L_2(\R^2);\; \supp(\hat v)\subset B(0,\sigma)\}$
such that (noting that $\|\cdot\|_{H_F^s(\R^2)} \simeq
\|\cdot\|_{H^s(\R^2)}$)
\begin{equation}\label{p0}
   \|v^0-g_\sigma\|_{H^s(\fR^2)} \le C(s,t)
\sigma^{s-t}\|v^0\|_{H^t(\fR^2)}.
\end{equation}
We then define $\psi:=I_Xg_\sigma$ and obtain, together with
Lemma~\ref{la_IX},
\begin{align} \label{p1}
   \|v-I_Xg_\sigma\|_{H^t(\G)}
   &\lesssim
   \|v^0-g_\sigma\|_{H^t(\fR^2)} + \|g_\sigma-I_Xg_\sigma\|_{H^t(\G)}
   \nonumber
   \\
   &\lesssim
   \|v^0-g_\sigma\|_{H^t(\fR^2)} + r^{-t} \|g_\sigma-I_Xg_\sigma\|_{H_r^t(\G)}
   \nonumber
   \\
   &\lesssim
   \|v^0\|_{H^t(\fR^2)}
   +
   r^{-t} \left(\frac hr\right)^{\tau-t}\|g_\sigma\|_{H_r^\tau(\G)}
   \nonumber
   \\
   &\simeq
   \|v\|_{\tilde H^t(\G)}
   +
   r^{-\tau} h^{\tau-t}\|g_\sigma\|_{H_r^\tau(\G)}.
\end{align}
Here, and in the rest of the proof, $h$ denotes $h_{X,\G}$.
Also we used that
\[
   \|v\|_{\tilde H^t(\G)}
   \simeq
   \|v\|_{H^t(\G)}
   \quad\forall v\in \tilde H^t(\G)=H_0^t(\G),
\]
when $t+1/2$ is not an integer, cf. \cite{LionsMagenes}.

Since $g_\sigma$ is a band-limited function there holds
\begin{align*}
   \|g_\sigma\|_{H_r^\tau(\G)}^2
   &\lesssim
   \|g_\sigma\|_{H_r^\tau(\fR^2)}^2
   \simeq
   \|g_\sigma\|_{H_{F,r}^\tau(\R^2)}^2
   \simeq
   \int_{B(0,\sigma)} |\hat g_\sigma(\xi)|^2 (1+r^2|\xi|^2)^\tau\,d\xi
   \\
   &\le
   (1+r^2\sigma^2)^{\tau-t}
   \int_{B(0,\sigma)} |\hat g_\sigma(\xi)|^2 (1+r^2|\xi|^2)^t\,d\xi
   \simeq
   (1+r^2\sigma^2)^{\tau-t}
   \|g_\sigma\|_{H_r^t(\fR^2)}^2.
\end{align*}
Again using \eqref{p0}, we obtain
\[
   \|g_\sigma\|_{H_r^t(\fR^2)}
   \lesssim
   \|g_\sigma\|_{H^t(\fR^2)}
   \le
   \|v^0-g_\sigma\|_{H^t(\fR^2)} + \|v^0\|_{H^t(\fR^2)}
   \lesssim
   \|v^0\|_{H^t(\fR^2)}
   \simeq
   \|v\|_{\tilde H^t(\G)},
\]
so that with the previous estimate,
\[
   \|g_\sigma\|_{H_r^\tau(\G)}
   \lesssim
   (1+r^2\sigma^2)^{(\tau-t)/2}
   \|v\|_{\tilde H^t(\G)}.
\]
Combination with \eqref{p1} yields
\begin{equation}\label{p3}
   \|v-I_Xg_\sigma\|_{H^t(\G)}
   \lesssim
   \left( 1 +
      r^{-\tau} h^{\tau-t}
      (1+r^2\sigma^2)^{(\tau-t)/2}
   \right)
   \|v\|_{\tilde H^t(\G)}.
\end{equation}
By the same arguments and using the same construction we bound
\begin{align} \label{p4}
   \|v-I_Xg_\sigma\|_{L_2(\G)}
   &\le
   \|v^0-g_\sigma\|_{L_2(\fR^2)} + \|g_\sigma-I_Xg_\sigma\|_{L_2(\G)}
   \nonumber
   \\
   &\lesssim
   \sigma^{-t} \|v\|_{\tilde H^t(\G)}
   +
   \left(\frac hr\right)^\tau \|g_\sigma\|_{H_r^\tau(\G)}
   \nonumber
   \\
   &\lesssim
   \sigma^{-t} \|v\|_{\tilde H^t(\G)}
   +
   r^{-\tau} h^\tau (1 + r^2\sigma^2)^{(\tau-t)/2} \|v\|_{\tilde H^t(\G)}
   \nonumber
   \\
   &=
   (\sigma^{-t} + r^{-\tau} h^\tau (1 + r^2\sigma^2)^{(\tau-t)/2})
   \|v\|_{\tilde H^t(\G)}.
\end{align}
Now choosing $\sigma=1/r$ in \eqref{p3} and \eqref{p4}, and
taking $r=h^\alpha$ with $\alpha=1-t/\tau$, we obtain
\[
   \|v-I_Xg_\sigma\|_{H^t(\G)}
   \lesssim
   \left( 1 + r^{-\tau} h^{\tau-t} \right)
   \|v\|_{\tilde H^t(\G)}
   =
   2\, \|v\|_{\tilde H^t(\G)}
\]
and
\[
   \|v-I_Xg_\sigma\|_{L_2(\G)}
   \lesssim
   (r^t + r^{-\tau}h^\tau) \|v\|_{\tilde H^t(\G)}
   =
   (h^{\alpha t} + h^t) \|v\|_{\tilde H^t(\G)}
   \lesssim
   h^{\alpha t} \|v\|_{\tilde H^t(\G)}.
\]
Interpolation gives
\begin{align*}
   \|v-I_Xg_\sigma\|_{H^s(\G)}
   &\lesssim
   \|v-I_Xg_\sigma\|_{H^t(\G)}^{s/t}
   \|v-I_Xg_\sigma\|_{L_2(\G)}^{1-s/t}
   \\
   &\lesssim
   h^{\alpha t(1-s/t)} \|v\|_{\tilde H^t(\G)}
   =
   h^{(1-t/\tau)(t-s)} \|v\|_{\tilde H^t(\G)}.
\end{align*}
This proves the theorem.
%
\end{proof}

\mysection{Conclusions}{sec_con}

Before presenting numerical experiments let us draw some conclusions.

Based on Assumptions (A1)--(A4) we have proved the quasi-optimal convergence
of the discrete scheme \eqref{BEM} (cf.~Theorem~\ref{thm_cea1}). The best approximation
of the Lagrangian multiplier is taken in the natural space of order $-1/2$ on
the domain of definition of the Lagrangian multiplier (outside $\G$).
The best approximation of the sought solution $\phi$ is measured in the natural
space of order $1/2$, on an extended domain and taken among scaled radial basis
functions of the constrained space, i.e. with piecewise mean value zero on the
extension. With Theorem~\ref{thm_cea} we managed to replace the latter term
(best approximation of $\phi$) with the best approximation in the unconstrained
space on the original domain $\G$. The price to pay is an additional stability
term that measures the approximant on the extension, where the unknown solution
vanishes. We were able to bound the best approximation error on the original
domain by a term that shows a convergence order that is close to the one of a
standard boundary element method when the regularity $\tau$ of the native space
becomes large, cf. Theorem~\ref{thm_approx}.

We were unable to show an appropriate bound for the stability term
$|\psi|_{H^t(\Gck)}$ in Theorem~\rref{thm_cea}. The natural tool to estimate
$|\psi|_{H^t(\Gck)}=|I_Xg_\sigma|_{H^t(\Gck)}$ is switching to the norm
of the native space. (Recall that $g_\sigma$ is a band-limited function that
approximates the solution of the integral equation.)
This switch produces a factor of $r^{-\tau}$ for the $L_2(\G)$-norm
of $g_\sigma$ which we cannot control efficiently as we can with the other
higher order term $|g_\sigma|_{H^\tau_r(\G)}$.

Our numerical results for the choice $k\simeq r\simeq h_{X,\G}^{1-t/\tau}$
and $t=1$ (cf. Figures~\ref{fig_error0}--\ref{fig_error2})
indicate that $|\psi|_{H^t(\Gck)}$
asymptotically behaves, for the discrete solution $\phi_N$, exactly as the
best approximation error. Note that for the chosen parameters,
\be \label{stab_term}
   \delta(\epsilon,t,k,r)
   |\psi|_{H^t(\Gck)}
   \simeq
   k^{t-1/2-\epsilon} |\psi|_{H^t(\Gck)}
   \simeq
   h_{X,\G}^{(t-1/2-\epsilon)(1-t/\tau)} |\psi|_{H^t(\Gck)},
\ee
so that it is enough to have boundedness of $|\psi|_{H^t(\Gck)}$
to obtain the optimal approximation order, cf. the final estimate
\eqref{main_con} below.
Based on the assumption that the minimizer $\psi\in H_{X,r}$ of
$\|\phi-\psi\|_{H^{1/2+\epsilon}(\G)}$ has bounded semi-norm
$|\psi|_{H^t(\Gck)}$, i.e.,
\be \label{stab}
   \inf_{\psi\in H_{X,r}}
   \Bigl(
      \|\phi-\psi\|_{H^{1/2+\epsilon}(\G)}
      +
      \delta(\epsilon,t,k,r)
      |\psi|_{H^t(\Gck)}
   \Bigr)
   \simeq
   \inf_{\psi\in H_{X,r}}
   \|\phi-\psi\|_{H^{1/2+\epsilon}(\G)},
\ee
let us deduce a final error estimate.
Under assumption \eqref{stab}, Theorem~\rref{thm_cea} gives
\be \label{t0}
   \|\phi^0-\phi_N\|_{\tHp(\tGk)}
   \le
   C(\epsilon)
   \inf_{\psi\in H_{X,r}}
   \|\phi-\psi\|_{H^{1/2+\epsilon}(\G)}
   +
   \inf_{\mu\in M_k} \|\lam-\mu\|_{\tHm_\g(\Gck)}.
\ee
By Theorem~\rref{thm_approx} we bound
\be \label{t1}
   \inf_{\psi\in H_{X,r}}
   \|\phi-\psi\|_{H^{1/2+\epsilon}(\G)}
   \lesssim
   h_{X,\G}^{(t-1/2-\epsilon)(1-t/\tau)}
   \|\phi\|_{\tilde H^t(\G)}.
\ee
There holds $\lam=(W\phi^0)|_{\Gc}$, cf. Proposition~\rref{prop1}.
Now, $\phi^0\in\tilde H^t(\tG)$ so that $\lam|_{\Gck}\in H^{t-1}(\Gck)$ by
the mapping properties of $W$, cf.~\rcite{McLean_00_SES}.
A standard approximation result
(see {\rm\cite[Lemma~2.3]{BespalovH_11_NHC}} for the $p$-result on an element;
this immediately generalizes to the present case) yields
\be \label{t2}
   \inf_{\mu\in M_k} \|\lam-\mu\|_{\tHm_\g(\Gck)}
   \lesssim
   k^{t-1/2} \|\lam\|_{H^{t-1}(\Gck)}.
\ee
Combining \eqref{t1} and \eqref{t2} with
$k\simeq h_{X,\tGk}^{1-t/\tau}$, and using the estimate \eqref{t0}, this leads to
\begin{equation} \label{main_con}
   \|\phi-\phi_N\|_{H^{1/2}(\G)}
   \le
   \|\phi^0-\phi_N\|_{\tHp(\tGk)}
   \le
   C\, h_{X,\G}^{(t-1/2-\epsilon)(1-t/\tau)}
   \quad \epsilon>0, t\in(1/2,1].
\end{equation}
The constant $C$ would depend on $t$, $\epsilon$ and $\tau$
but not on $X$ and $h_{X,\G}$ under the assumptions made.

\mysection{Numerical results}{sec_num}

We consider the model problem \eqref{IE} with $\Gamma=(0,1)\times (0,1)$
and $f=1$.
The nodes of $X$ are distributed uniformly on $\bar\G$ where $\G$ is extended
by a strip of uniform squares (with side length $k$) as in Figure~\ref{fig_mesh}.
The support of the scaled radial basis functions is indicated in one case.
The setup is selected so that assumptions (A1)--(A4) are satisfied and with mesh norm
$h=h_{X,\G}^{1-1/\tau}\simeq r\simeq k$, according to \eqref{main}.
We use scaled radial basis functions with the radial basis functions defined in
\cite{Wendland_98_EEI} (for the case $d=2$ there which corresponds to functions in $\R^2$).
These functions satisfy relation \eqref{hatPhi} with $\tau=3/2+m$ where $m$ is the parameter
of the corresponding regularity $C^{2m}$. The degree of the corresponding univariate
function is $2+3m$.

\begin{figure}[htb]
\begin{center}
\includegraphics[width=0.4\textwidth]{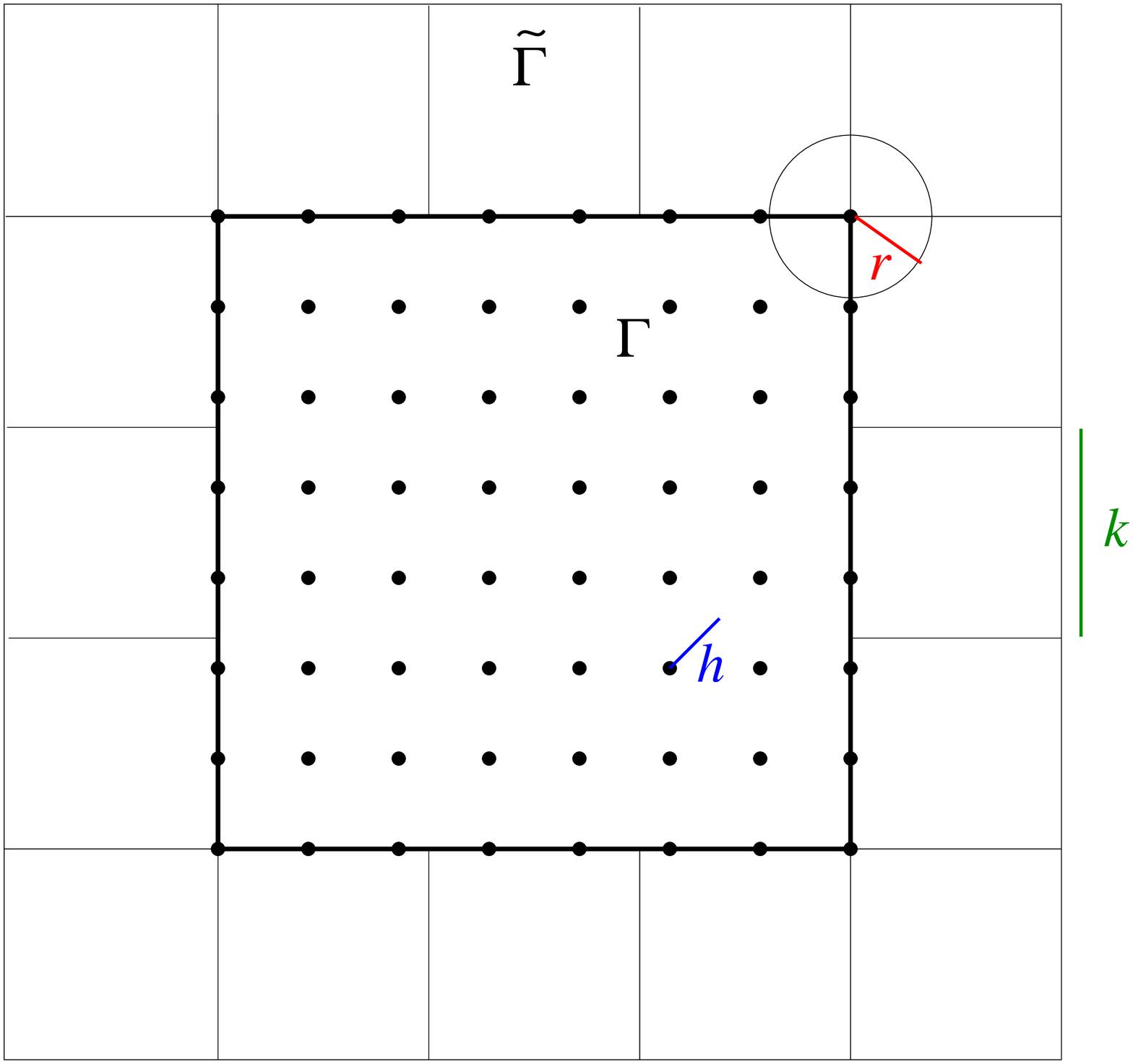}
\end{center}
\caption{Uniformly distributed nodes $X$ on $\bar\G$, and uniform mesh generating
         $\Gck$ and extending $\G$ to $\tilde\G$.}
\label{fig_mesh}
\end{figure}

We calculate the errors in an approximating (and heuristic) manner.
For a conforming method, by making use of the symmetry of the hypersingular operator,
one obtains
\begin{align*}
   \|\phi^0-\phi_N\|_{\tHp(\tG)}^2
   &\simeq
   \<W(\phi^0-\phi_N),(\phi^0-\phi_N)\>_{\tG}
   \\
   &=
   \<W\phi^0,\phi^0\>_{\tG} - \<W\phi_N,\phi_N\>_{\tG}
   =
   \<W\phi,\phi\>_{\G} - \<W\phi_N,\phi_N\>_{\tG}.
\end{align*}
The last term is available through the stiffness matrix of the problem and the
term $\<W\phi,\phi\>_{\G}$ can be approximated by extrapolation,
cf.~\cite{ErvinHS_93_hpB}. In our case of a non-conforming (or mixed) approximation,
this calculation has a perturbation which is due to the term $\lam=(W\phi^0)|_{\Gck}\not=0$.
Since we do not know the exact solution $\phi$ so that $\lam$ cannot be calculated
we approximate the relative error in the energy norm by the expression
\begin{equation} \label{app_err}
   \Bigl(\|\phi\|_{\rm ex}^2 - \<W\phi_N,\phi_N\>_{\tG}\Bigr)^{1/2}/\|\phi\|_{\rm ex}.
\end{equation}
Here, $\|\phi\|_{\rm ex}^2$ denotes the extrapolated value substituting $\<W\phi,\phi\>_{\G}$.

For different values of $\tau$,
we present the approximated errors on double logarithmic scales along with the expected
convergence rates (upper limit) according to \eqref{main}.
For comparability we use the same scales in all the figures.
Table~\ref{tab1} lists the values of $\tau$ with corresponding figure number, data
(regularity $C^{2m}$ and polynomial degrees as mentioned before) and the limit
$1/2(1-1/\tau)$ for the expected convergence rates.

\begin{table}[htb]
\begin{center}
\begin{tabular}{cc|ccc}
   $\tau$ & figure no. & regularity & polynomial degree &
            exp. conv. rate $\frac 12(1-\frac 1\tau)$\\ \hline
   $3/2$  &  \ref{fig_error0}   & $C^0$  &  $2$  &  $1/6$\\
   $5/2$  &  \ref{fig_error1}   & $C^2$  &  $5$  &  $3/10$\\
   $7/2$  &  \ref{fig_error2}   & $C^4$  &  $8$  &  $5/14$
\end{tabular}
\caption{Values of $\tau$ in the numerical experiments with corresponding data,
         expected convergence rates and figures.}
\label{tab1}
\end{center}
\end{table}

Figures~\ref{fig_error0}--\ref{fig_error2} confirm quite precisely
the predicted convergence. The lines indicated by ``error'' give the approximate
relative errors calculated by \eqref{app_err} whereas the lines ``stab term''
give the values of the stability term $\delta(\epsilon,t,k,r)|\psi|_{H^t(\Gck}$
for $\epsilon=0$, $t=1$ (the limit of the regularity) and $\psi$ the
calculated RBF approximation, cf. \eqref{stab_term}. The lines ``expected''
plot the expected convergence rates as listed in Table~\ref{tab1}. They correspond
to our conclusion \eqref{main_con} for $\epsilon=0$ and $t=1$.
All results indicate that the errors have the predicted convergence
rates and that the stability term \eqref{stab_term}
fulfills our conjecture \eqref{stab}.

We have implemented the method
by numerical integration with an overkill of number of integration nodes.
We used transformation to polar coordinates so that the
singularity from the fundamental solution cancels in the diagonal entries of the stiffness matrix.
Nevertheless, note that the polynomial degrees for larger values of $\tau$ are large
($8$ in the case $\tau=\frac 72$) which makes their implementation a non-trivial task.

In the case $\tau=\frac 72$ there is a large pre-asymptotic range
(Figure~\ref{fig_error2}). Note also that for $\tau=\frac 32$ we were not able to
calculate the stability term for the whole range of unknowns (about 30,000).
In this case the radial basis functions are only continuous and the numerical
calculation of the $H^1$-semi-norm becomes unstable.
%

\begin{figure}[htb]
\begin{center}
\includegraphics[width=0.7\textwidth]{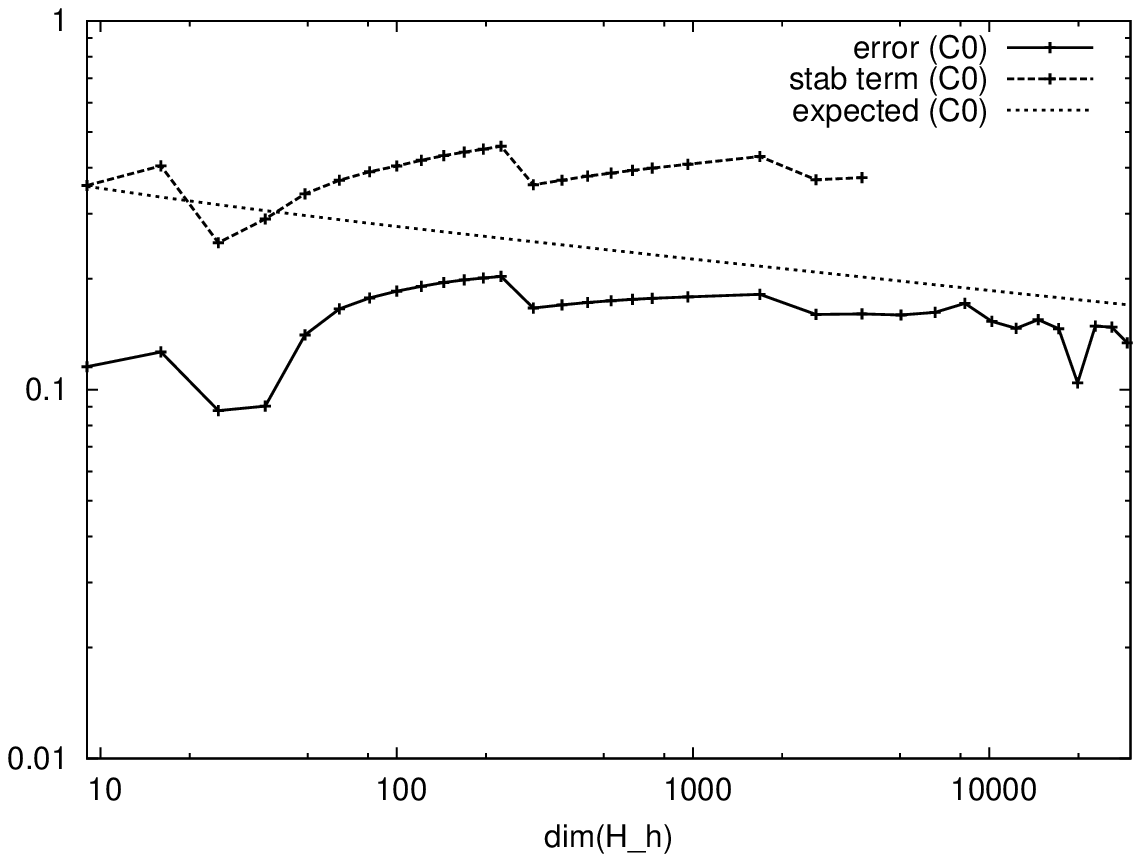}
\end{center}
\caption{Relative error and theoretical convergence rate for $\tau=1.5$.}
\label{fig_error0}
\end{figure}

\begin{figure}[htb]
\begin{center}
\includegraphics[width=0.7\textwidth]{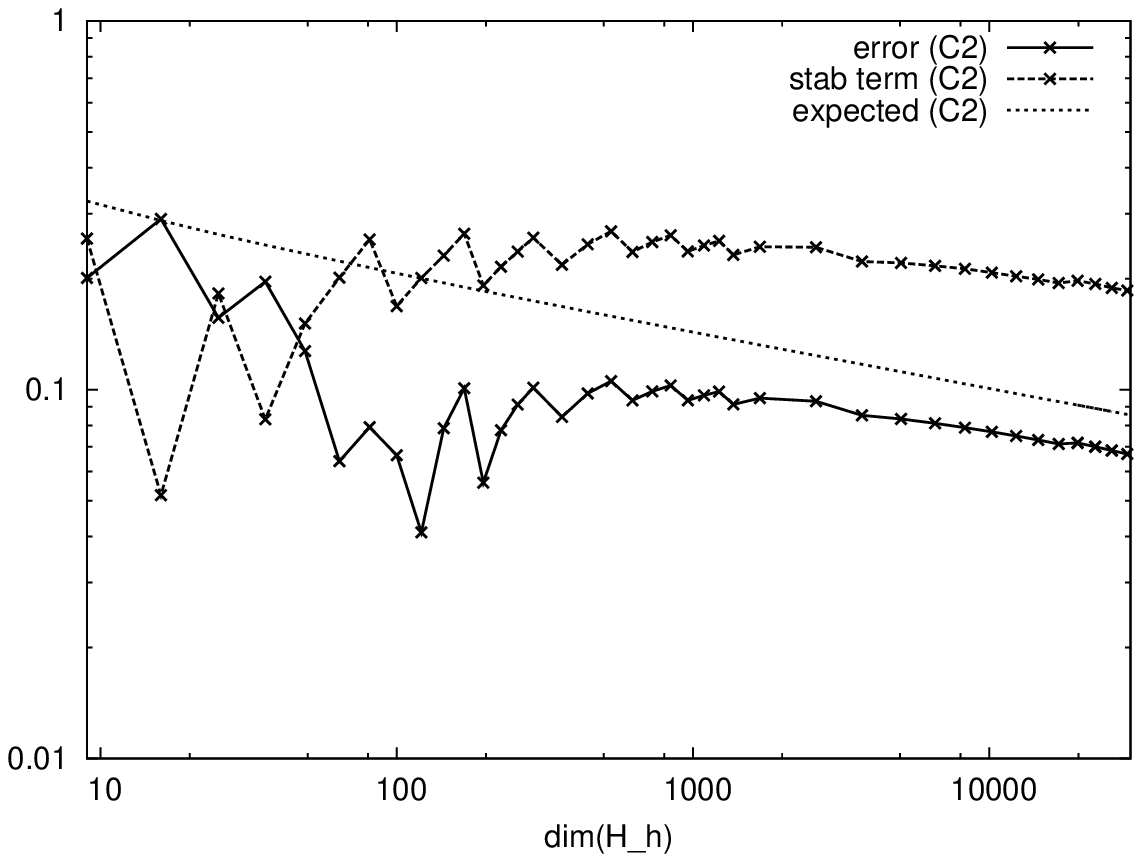}
\end{center}
\caption{Relative error and theoretical convergence rate for $\tau=2.5$.}
\label{fig_error1}
\end{figure}

\begin{figure}[htb]
\begin{center}
\includegraphics[width=0.7\textwidth]{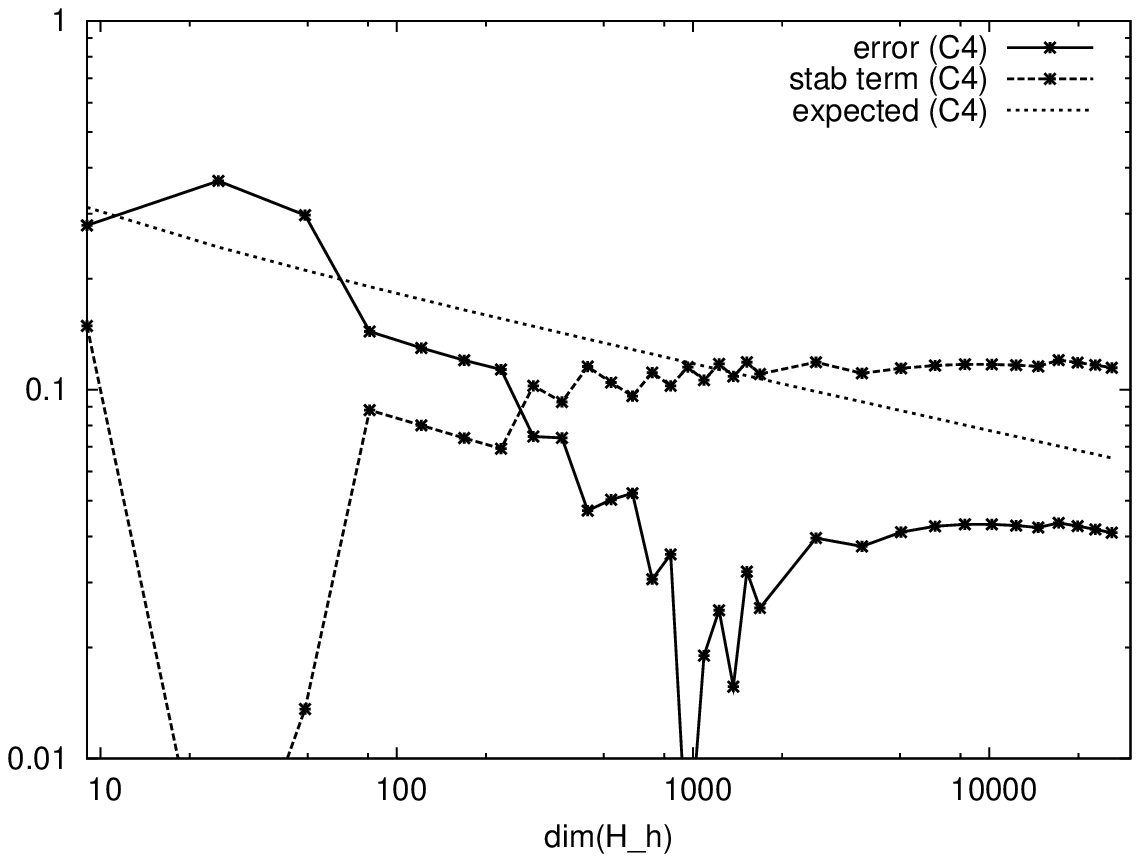}
\end{center}
\caption{Relative error and theoretical convergence rate for $\tau=3.5$.}
\label{fig_error2}
\end{figure}

%

\bigskip
\clearpage
\noindent{\bf Acknowledgment.}
A significant part of this work has been done while N.H. was visiting
the School of Mathematics at The University of New South Wales in Sydney.
Their hospitality is gratefully acknowledged.



\end{document}